\newcommand{\eqref}[1]{(\ref{#1})}
\newtheorem{lemma}[thm]{Lemma}
\newtheorem{prop}[thm]{Proposition}
\newtheorem{cor}[thm]{Corollary}
\mathchardef\varGamma="0100
\begin{document}
\begin{frontmatter}

\title{Degenerate parabolic stochastic partial differential equations:
Quasilinear case}
\runtitle{Degenerate parabolic SPDE's: Quasilinear case}

\begin{aug}
\author[A]{\fnms{Arnaud}~\snm{Debussche}\ead[label=e1]{arnaud.debussche@bretagne.ens-cachan.fr}},
\author[B]{\fnms{Martina}~\snm{Hofmanov\'a}\ead[label=e2]{martina.hofmanova@mis.mpg.de}\corref{}}
\and
\author[C]{\fnms{Julien}~\snm{Vovelle}\ead[label=e3]{vovelle@math.univ-lyon1.fr}}
\runauthor{A. Debussche, M. Hofmanov\'a and J. Vovelle}
\affiliation{IRMAR CNRS et ENS Rennes, Max Planck Institute for Mathematics\\  in the Sciences and Universit\'e Lyon 1}
\address[A]{A. Debussche\\
IRMAR CNRS et ENS Rennes\\
Campus de Ker Lann\\
F-35170 Bruz\\
France\\
\printead{e1}}
\address[B]{M. Hofmanov\'a\\
Max Planck Institute for Mathematics\\
\quad in the Sciences\\
Inselstr. 22\\
04103 Leipzig\\
Germany\\
\printead{e2}}
\address[C]{J. Vovelle\\
Universit\'e de Lyon\\
CNRS, Universit\'e Lyon 1\\
Institut de Camille Jordan\\
43 boulevard du 11 novembre 1918\\
F-69622 Villeurbanne Cedex\\
France\\
\printead{e3}}
\end{aug}

%
\received{\smonth{8} \syear{2013}}
%
\revised{\smonth{11} \syear{2014}}

%
\begin{abstract}
In this paper, we study the Cauchy problem for a quasilinear degenerate
parabolic stochastic partial differential equation driven by a
cylindrical Wiener process. In particular, we adapt the notion of
kinetic formulation and kinetic solution and develop a well-posedness
theory that includes also an $L^1$-contraction property. In comparison
to the previous works of the authors concerning stochastic hyperbolic
conservation laws [\textit{J. Funct. Anal.} \textbf{259}
(2010) 1014--1042] and semilinear
degenerate parabolic SPDEs [\textit{Stochastic Process. Appl.}
\textbf{123} (2013) 4294--4336], the present result
contains two new ingredients that provide simpler and more effective
method of the proof: a generalized It\^{o} formula that permits a
rigorous derivation of the kinetic formulation even in the case of weak
solutions of certain nondegenerate approximations and a direct proof of
strong convergence of these approximations to the desired kinetic
solution of the degenerate problem.
\end{abstract}

%
\begin{keyword}[class=AMS]
\kwd{60H15}
\kwd{35R60}
\end{keyword}
\begin{keyword}
\kwd{Quasilinear degenerate parabolic stochastic partial
differential equation}
\kwd{kinetic formulation}
\kwd{kinetic solution}
\end{keyword}
\end{frontmatter}

\section{Introduction}

We consider the Cauchy problem for a quasilinear degenerate parabolic
stochastic partial differential equation
%
\begin{eqnarray}
\qquad\quad\mathrm{d}u+\operatorname{div} \bigl(B(u) \bigr)\,\mathrm {d}t&=&
\operatorname{div} \bigl(A(u)\nabla u \bigr)\,\mathrm{d}t+\Phi(u)\,\mathrm{d}W,
\qquad x\in\mathbb{T}^N, t\in(0,T),
\nonumber
\\[-8pt]
\label{eq}
\\[-8pt]
\nonumber
u(0)&=& u_0,
\end{eqnarray}
where $W$ is a cylindrical Wiener process. Equations of this type model
the phenomenon of convection-diffusion of ideal fluids and, therefore,
arise in a wide variety of important applications, including, for
instance, two or three phase flows in porous media or
sedimentation--consolidation processes (for a thorough exposition of
this area given from a practical point of view we refer the reader to
\cite{petrol} and the references therein). The addition of a
stochastic noise to this physical model is fully natural
as it represents external perturbations or a lack of knowledge of
certain physical parameters. Toward the applicability of the results,
it is necessary to treat the problem \eqref{eq} under very general
hypotheses. Particularly, without the assumption of positive
definiteness of the diffusion matrix $A$, the equation can be
degenerate which brings the main difficulty in the problem solving. We
assume the matrix $A$ to be positive semidefinite and, as a
consequence, it can, for instance, vanish completely which leads to a
hyperbolic conservation law. We point out that we do not intend to
employ any form of regularization by the noise to solve \eqref{eq},
and thus the deterministic equation is included in our theory as well.

In order to find a suitable concept of solution for our model problem
\eqref{eq}, we observe that already in the case of deterministic
hyperbolic conservation law it is possible to find simple examples
supporting the two following claims (see, e.g., \cite{malek}):
\begin{longlist}[(ii)]
\item[(i)] classical $C^1$ solutions do not exist,
\item[(ii)] weak (distributional) solutions lack uniqueness.
\end{longlist}
The first claim is a consequence of the fact that any smooth solution
has to be constant along characteristic lines, which can intersect in
finite time (even in the case of smooth data) and shocks can be
produced. The second claim demonstrates the inconvenience that often
appears in the study of PDEs and SPDEs: the usual way of weakening the
equation leads to the occurrence of nonphysical solutions and,
therefore, additional assumptions need to be imposed in order to select
the physically relevant ones and to ensure uniqueness. Hence, one needs
to find some balance that allows to establish existence of a unique
(physically reasonable) solution.

Toward this end, we adapt the notion of kinetic formulation and kinetic
solution. This concept was first introduced by Lions, Perthame, Tadmor
\cite{lions} for deterministic hyperbolic conservation laws. In
comparison to the notion of entropy solution introduced by Kru\v{z}kov
\cite{kruzk}, kinetic solutions seem to be better suited particularly
for degenerate parabolic problems since they allow us to keep the
precise structure of the parabolic dissipative measure, whereas in the
case of entropy solution part of this information is lost and has to be
recovered at some stage. This technique also supplies a good technical
framework to establish a well-posedness theory which is the main goal
of the present paper.

Other references for kinetic or entropy solutions in the case of
deterministic hyperbolic conservation laws include, for instance, \cite{car,vov,lpt1,pert1,perth}.
Deterministic degenerate parabolic PDEs were studied by Carrillo \cite
{car} and Chen and Perthame \cite{chen} by means of both entropy and
kinetic solutions. Also in the stochastic setting there are several
papers concerned with entropy solutions for hyperbolic conservation
laws, the first one being \cite{kim} then \cite{bauzet,feng,wittbold}. The first work dealing with kinetic solutions
in the stochastic setting was given by Debussche and Vovelle \cite
{debus}. Their concept was then further generalized to the case of
semilinear degenerate parabolic SPDEs by Hofmanov\'a \cite{hof}. To
the best of our knowledge, stochastic equations of type \eqref{eq}
have not been studied in this generality yet, neither by means of
kinetic formulation nor by any other approach. Recently, Bauzet, Vallet
and Wittbold \cite{bauzet1} considered entropy solutions for
degenerate parabolic--hyperbolic SPDEs under different assumptions on
the data and the nonlinearities and under stronger assumptions on the
noise. There is also a different kind of stochastic conservation laws:
equations with a stochastic forcing not in the source term but in the
flux term. Such equations, in the first-order case, have been studied
recently by Lions, Perthame and Souganidis, \cite
{LionsPerthameSouganidis12,LionsPerthameSouganidis13}.

In comparison to the previous works of the authors \cite{debus} and
\cite{hof}, the present proof of well-posedness contains two new
ingredients: a generalized It\^o formula that permits a rigorous
derivation of the kinetic formulation even in the case of weak
solutions of certain nondegenerate approximations (see \hyperref[secito]{Appendix}) and a direct proof of strong convergence of these
approximations to the desired kinetic solution of the degenerate
problem (see Section~\ref{subsecstrong}). In order to explain
these recent developments more precisely, let us recall the basic ideas
of the proofs in \cite{debus} and \cite{hof}.

In the case of hyperbolic conservation laws \cite{debus}, the authors
defined a notion of generalized kinetic solution and obtained a
comparison result showing that any generalized kinetic solution is
actually a kinetic solution. Accordingly,
the proof of existence simplified since only weak convergence of
approximate viscous solutions was
necessary. The situation was quite different in the case of semilinear
degenerate parabolic equations \cite{hof}, since this approach was no
longer applicable.
The proof of the comparison principle was much more delicate and,
consequently, generalized kinetic solutions were not allowed and, therefore,
strong convergence of approximate solutions was needed in order to
prove existence. The limit argument was based on a compactness method:
uniform estimates yielded tightness and consequently also strong
convergence of the approximate sequence on another probability space
and the existence of a martingale kinetic solution followed. The
existence of a pathwise kinetic solution was then obtained by the Gy\"{o}ngy--Krylov characterization of convergence in probability.

Due to the second-order term in \eqref{eq}, we are for the moment not
able to apply efficiently the method of generalized kinetic solutions.
Let us explain why by considering the Definition~\ref{kinsol} of
solution. We may adapt this definition to introduce a notion of
generalized kinetic solution (in the spirit of \cite{debus}, e.g.), and we would then easily obtain the equivalent of the kinetic
equation \eqref{eqkinformul} by passing to the limit on suitable
approximate problems. This works well in the first-order case, provided
uniqueness of generalized solutions can be shown. To prove such a
result here, with second-order terms, we need the second important item
in Definition~\ref{kinsol}, the chain-rule \eqref{eqchainrule}. We
do not know how to relax this equality and we do not know how to obtain
it by mere weak convergence of approximations: strong convergence seems
to be necessary. Therefore, it would not bring any simplification here
to consider generalized
solutions. On the other hand, it would be possible to apply the
compactness method as established in \cite{hof} to obtain strong
convergence. However, as this is quite technical, we propose a simpler
proof of the strong convergence based on the techniques developed in
the proof of the comparison principle: comparing two (suitable)
nondegenerate approximations, we obtain the strong convergence in $L^1$
directly. Note that this approach does not apply to the semilinear case
as no sufficient control of the second-order term is known.

Another important issue here was the question of regularity of the
approximate solutions. In both works \cite{debus} and \cite{hof}, the
authors derived the kinetic formulation for sufficiently regular
approximations only. This obstacle was overcome by showing the
existence of these regular approximations in \cite{hof2}, however, it
does not apply to the quasilinear case where a suitable regularity
result is still missing: even in the deterministic setting the proofs,
which can be found in \cite{lady}, are very difficult and technical
while the stochastic case remains open. In the present paper, we
propose a different way to solve this problem, namely, the generalized
It\^o formula (Proposition~\ref{propito}) that leads to a clear-cut
derivation of the kinetic formulation also for weak solutions, and
hence avoids the necessity of regular approximations.

The paper is organized as follows. In Section~\ref{secnotation}, we
introduce the basic setting, define the notion of kinetic solution and
state our main result, Theorem~\ref{thmmain}. Section~\ref{seccomparison} is devoted to the proof of uniqueness together with
the $L^1$-comparison principle, Theorem~\ref{uniqueness}. The
remainder of the paper deals with the existence part of Theorem~\ref
{thmmain} which is divided into four parts. First, we prove existence
under three additional hypotheses: we consider \eqref{eq} with regular
initial data, positive definite diffusion matrix $A$ and Lipschitz
continuous flux function $B$, Section~\ref{secnondeg}. Second, we
relax the hypothesis upon $B$ and prove existence under the remaining
two additional hypotheses in Section~\ref{secnondeg1}. In Section~\ref{secdeg}, we proceed to the proof of existence in the degenerate
case while keeping the assumption upon the initial condition. The proof
of Theorem~\ref{thmmain} is then completed in Section~\ref{secgeneral}. In \hyperref[secito]{Appendix}, we establish the above
mentioned generalized It\^o formula for weak solutions of a general
class of SPDEs.

\section{Hypotheses and the main result}
\label{secnotation}

\subsection{Hypotheses}

We now give the precise assumptions on each of the terms appearing in
the above equation \eqref{eq}. We work on a finite-time interval
$[0,T], T>0$, and consider periodic boundary conditions: $x\in\mathbb{T}^N$
where $\mathbb{T}^N$ is the $N$-dimensional torus.
The flux function
\[
B=(B_1,\dots,B_N)\dvtx \mathbb{R}\longrightarrow
\mathbb{R}^N
\]
is supposed to be of class $C^2$ with a polynomial growth of its
derivative, which is denoted by $b=(b_1,\dots,b_N)$.
The diffusion matrix
\[
A=(A_{ij})_{i,j=1}^N\dvtx \mathbb{R}\longrightarrow
\mathbb{R}^{N\times N}
\]
is symmetric and positive semidefinite. Its square-root matrix, which
is also symmetric and positive semidefinite, is denoted by $\sigma$.
We assume that $\sigma$ is bounded and locally $\gamma$-H\"older
continuous for some $\gamma>1/2$, that is,
%
\begin{equation}
\label{sigma}
\bigl|\sigma(\xi)-\sigma(\zeta)\bigr|\leq C|\xi-\zeta|^\gamma\qquad
\forall\xi,\zeta\in\mathbb{R}, |\xi-\zeta|< 1.
\end{equation}

Regarding the stochastic term, let $(\Omega,\mathscr{F},(\mathscr
{F}_t)_{t\geq
0},\mathbb{P})$ be a stochastic basis with a complete, right-continuous
filtration. Let $\mathcal{P}$ denote the predictable $\sigma$-algebra
on $\Omega\times[0,T]$ associated to $(\mathscr{F}_t)_{t\geq0}$. The
initial datum may be random in general, that is, $\mathscr{F}_0$-measurable,
and we assume $u_0\in L^p(\Omega;L^p(\mathbb{T}^N))$ for all $p\in
[1,\infty
)$. The process $W$ is a cylindrical Wiener process: $W(t)=\sum_{k\geq
1}\beta_k(t) e_k$ with $(\beta_k)_{k\geq1}$ being mutually
independent real-valued standard Wiener processes relative to
$(\mathscr{F}
_t)_{t\geq0}$ and $(e_k)_{k\geq1}$ a complete orthonormal system in a
separable Hilbert space~$\mathfrak{U}$. In this setting, we can assume
without loss of generality that the $\sigma$-algebra $\mathscr{F}$ is
countably generated and $(\mathscr{F}_t)_{t\geq0}$ is the filtration
generated by the Wiener process and the initial condition.
For each $z\in L^2(\mathbb{T}^N)$, we consider a mapping $ \Phi
(z)\dvtx \mathfrak{U}\rightarrow L^2(\mathbb{T}^N)$ defined by $\Phi
(z)e_k=g_k(\cdot,z(\cdot))$. In particular, we suppose that $g_k\in
C(\mathbb{T}^N\times\mathbb{R})$ and the following conditions:
%
\begin{eqnarray}
\label{linrust}
G^2(x,\xi) &=& \sum_{k\geq1}\bigl|g_k(x,
\xi)\bigr|^2\leq C \bigl(1+|\xi|^2 \bigr),
\\
\label{skorolip}
\sum_{k\geq1}\bigl|g_k(x,
\xi)-g_k(y,\zeta)\bigr|^2 &\leq &  C \bigl(|x-y|^2+|\xi
-\zeta|h\bigl(|\xi-\zeta|\bigr) \bigr),
\end{eqnarray}
are fulfilled for every $x,y\in\mathbb{T}^N, \xi,\zeta\in\mathbb
{R}$, where
$h$ is a continuous nondecreasing function on $\mathbb{R}_+$
satisfying, for
some $\alpha>0$,
%
\begin{equation}
\label{fceh}
h(\delta)\leq C\delta^\alpha,\qquad\delta<1.
\end{equation}
The conditions imposed on $\Phi$, particularly assumption \eqref
{linrust}, imply that
\[
\Phi\dvtx L^2\bigl(\mathbb{T}^N\bigr)\longrightarrow
L_2\bigl(\mathfrak{U};L^2\bigl(\mathbb{T}^N
\bigr)\bigr),
\]
where $L_2(\mathfrak{U};L^2(\mathbb{T}^N))$ denotes the collection of
Hilbert--Schmidt operators from $\mathfrak{U}$ to $L^2(\mathbb
{T}^N)$. Thus,
given a predictable process $u\in L^2(\Omega;L^2(0,T;\break L^2(\mathbb{T}^N)))$,
the stochastic integral $t\mapsto\int_0^t\Phi(u)\,\mathrm{d}W$ is
a well-defined process taking values in $L^2(\mathbb{T}^N)$ (see \cite
{daprato} for
detailed construction).

Finally, we define the auxiliary space $\mathfrak{U}_0\supset
\mathfrak{U}$ via
\[
\mathfrak{U}_0= \biggl\{v=\sum_{k\geq1}
\alpha_k e_k; \sum_{k\geq
1}
\frac{\alpha_k^2}{k^2}<\infty \biggr\},
\]
endowed with the norm
\[
\|v\|^2_{\mathfrak{U}_0}=\sum_{k\geq1}
\frac{\alpha
_k^2}{k^2},\qquad v=\sum_{k\geq1}
\alpha_k e_k.
\]
Note that the embedding $\mathfrak{U}\hookrightarrow\mathfrak{U}_0$
is Hilbert--Schmidt. Moreover, trajectories of $W$ are $\mathbb
{P}$-a.s. in
$C([0,T];\mathfrak{U}_0)$ (see \cite{daprato}).

In this paper, we use the brackets $\langle\cdot,\cdot\rangle$ to
denote the duality between the space of distributions over $\mathbb{T}
^N\times\mathbb{R}$ and $C_c^\infty(\mathbb{T}^N\times\mathbb
{R})$ and the duality
between $L^p(\mathbb{T}^N\times\mathbb{R})$ and $L^q(\mathbb
{T}^N\times\mathbb{R})$. If there
is no danger of confusion, the same brackets will also denote the
duality between $L^p(\mathbb{T}^N)$ and $L^q(\mathbb{T}^N)$.
The differential operators of gradient $\nabla$, divergence
$\operatorname{div}$
and Laplacian $\Delta$ are always understood with respect to the space
variable $x$.

\subsection{Definitions}

As the next step, we introduce the kinetic formulation of~\eqref{eq}
as well as the basic definitions concerning the notion of kinetic
solution. The motivation for this approach is given by the nonexistence
of a strong solution and, on the other hand, the nonuniqueness of weak
solutions, even in simple cases. The idea is to establish an additional
criterion---the kinetic formulation---which is automatically
satisfied by any weak solution to \eqref{eq} in the nondegenerate case
and which permits to ensure the well-posedness.

\begin{defin}[(Kinetic measure)]\label{mees}
A mapping $m$ from $\Omega$ to $\mathcal{M}_b^+([0,T]\times\mathbb{T}
^M\times\mathbb{R})$, the set of nonnegative bounded measures over
$[0,T]\times\mathbb{T}^N\times\mathbb{R}$, is said to be a kinetic
measure provided:
\begin{longlist}[(iii)]
\item[(i)] $m$ is measurable in the following sense: for each $\psi\in
C_0([0,T]\times\mathbb{T}^N\times\mathbb{R})$ the mapping $m(\psi
)\dvtx \Omega
\rightarrow\mathbb{R}$ is measurable,
\item[(ii)] $m$ vanishes for large $\xi$: if $B_R^c=\{\xi\in\mathbb{R};
|\xi
|\geq R\}$ then
\[
\label{infinity}
\lim_{R\rightarrow\infty}\mathbb{E} m \bigl([0,T]\times\mathbb
{T}^N\times B_R^c \bigr)=0,
\]
\item[(iii)] for any $\psi\in C_0(\mathbb{T}^N\times\mathbb{R})$
\[
\int_{\mathbb{T}^N\times[0,t]\times\mathbb{R}}\psi(x,\xi)\, \mathrm{d}m(s,x,\xi)\in
L^2\bigl(\Omega\times[0,T]\bigr)
\]
admits a predictable representative.\footnote{Throughout the paper, the
term \textit{representative} stands for an element of a class of equivalence.}
\end{longlist}
\end{defin}

\begin{defin}[(Kinetic solution)]\label{kinsol}
Assume that, for all $p\in[1,\infty)$,
\[
u\in L^p\bigl(\Omega\times[0,T],\mathcal{P},\mathrm{d}\mathbb{P}
\otimes \mathrm{d} t;L^p\bigl(\mathbb{T}^N\bigr)\bigr)\cap
L^p\bigl(\Omega;L^\infty\bigl(0,T;L^p\bigl(
\mathbb{T}^N\bigr)\bigr)\bigr)
\]
is such that:
\begin{longlist}[(ii)]
%
\item[(i)] $\operatorname{div}\int_0^u\sigma(\zeta)\,\mathrm{d}\zeta
\in L^2(\Omega
\times[0,T]\times\mathbb{T}^N)$,
\item[(ii)] for any $\phi\in C_b(\mathbb{R})$ the following chain rule formula
holds true:
%
\begin{equation}
\label{eqchainrule}
\quad\operatorname{div}\int^u_0
\phi(\zeta)\sigma(\zeta)\,\mathrm {d}\zeta=\phi (u)\operatorname{div}\int
^u_0\sigma(\zeta)\,\mathrm{d}\zeta\qquad
\mbox{in } \mathcal{D'}\bigl(\mathbb{T}^N\bigr)
\mbox{ a.e. }(\omega,t).
\end{equation}
\end{longlist}
Let $n_1\dvtx \Omega\rightarrow\mathcal{M}^+_b([0,T]\times\mathbb
{T}^M\times
\mathbb{R})$ be defined as follows: for any $\varphi\in C_0([0,T]
\times\mathbb{T}
^N\times\mathbb{R})$
\[
n_1(\varphi)=\int_0^T\!\!\int
_{\mathbb{T}^N}\!\int_{\mathbb{R}}\varphi (t,x,\xi ) \biggl|
\operatorname{div}\int_0^u\sigma(\zeta)\,
\mathrm{d}\zeta \biggr|^2\,\mathrm{d} \delta_{u(t,x)}(\xi)\,\mathrm{d}x
\,\mathrm{d}t.
\]
Then $u$ is said to be a kinetic solution to \eqref{eq} with initial
datum $u_0$ provided there exists a kinetic measure $m\geq n_1$,
$\mathbb{P}$-a.s., such that the pair $(f=\mathbf{1}_{u>\xi},m)$ satisfies,
for all $\varphi\in C_c^\infty([0,T)\times\mathbb{T}^N\times
\mathbb{R})$,
$\mathbb{P}\mbox{-a.s.}$,
\begin{eqnarray}
 %
&& \int_0^T
\bigl\langle f(t),\partial_t\varphi(t) \bigr\rangle\,\mathrm{d} t+
\bigl\langle f_0,\varphi(0) \bigr\rangle+\int_0^T
\bigl\langle f(t),b\cdot\nabla\varphi(t) \bigr\rangle\,\mathrm{d}t
\nonumber\\
&&\quad {}+\int_0^T \bigl\langle f(t),A\dvtx
\mathrm{D}^2 \varphi (t) \bigr\rangle\,\mathrm{d}t
\nonumber
\\[-8pt]
\label{eqkinformul}
\\[-8pt]
\nonumber
&&\qquad =-\sum_{k\geq1}\int_0^T\!\!
\int_{\mathbb{T}^N}g_k \bigl(x,u(t,x) \bigr)\varphi
\bigl(t,x,u(t,x) \bigr)\,\mathrm{d}x\,\mathrm{d}\beta_k(t)
\\
\nonumber
&&\qquad\quad {}-\frac{1}{2}\int_0^T\!\!\int
_{\mathbb{T}^N}G^2 \bigl(x,u(t,x) \bigr)
\,\partial_\xi\varphi \bigl(t,x,u(t,x) \bigr)\,\mathrm{d}x\,\mathrm{d}
t+m(\partial_\xi\varphi). 
\end{eqnarray}
We have used the notation $A\dvtx  B=\sum_{i,j}a_{ij}b_{ij}$ for
two matrices $A=(a_{ij})$, $B=(b_{ij})$ of
the same size.
\end{defin}

\begin{rem}
We emphasize that a kinetic solution is, in fact, a class of equivalence in $L^p(\Omega\times[0,T],\mathcal{P},\mathrm{d}\mathbb
{P}\otimes
\mathrm{d}t;L^p(\mathbb{T}^N))$ so not necessarily a stochastic
process in the
usual sense. Nevertheless, it will be seen later (see Corollary~\ref
{cont}) that, in this class of equivalence, there exists a
representative with good continuity properties, namely, $u\in
C([0,T];L^p(\mathbb{T}^N))$, $\mathbb{P}$-a.s. and, therefore, it can
be regarded
as a stochastic process.
\end{rem}

By $f=\mathbf{1}_{u>\xi}$ we understand a real function of four variables,
where the additional variable $\xi$ is called velocity.
In the deterministic case, that is, corresponding to the situation
$\Phi=0$, the equation \eqref{eqkinformul} in the above
definition is a weak form of the so-called kinetic formulation of
\eqref{eq}
\[
\partial_t \mathbf{1}_{u>\xi}+b\cdot\nabla
\mathbf{1}_{u>\xi
}-A\dvtx \mathrm{D}^2 \mathbf{1}_{u>\xi}=
\partial_\xi m,
\]
where the unknown is the pair $(\mathbf{1}_{u>\xi},m)$ and it is
solved in
the sense of distributions over $[0,T)\times\mathbb{T}^N\times
\mathbb{R}$.
In the stochastic case, we write formally\footnote{Hereafter, we
employ the notation which is commonly used in papers concerning the
kinetic solutions to conservation laws and write $\delta_{u=\xi}$ for
the Dirac measure centered at $u(t,x)$.}
%
\begin{equation}
\label{kinetic}
\quad\partial_t \mathbf{1}_{u>\xi}+b\cdot\nabla
\mathbf{1}_{u>\xi
}-A\dvtx \mathrm{D} ^2\mathbf{1}_{u>\xi}=
\delta_{u=\xi} \Phi(u)\dot{W}+\partial _\xi \bigl(m-
\tfrac{1}{2}G^2\delta_{u=\xi} \bigr).\hspace*{-9pt}
\end{equation}
It will be seen later that this choice is reasonable since for any $u$
being a weak solution to \eqref{eq} that belongs to $L^p(\Omega
;C([0,T];L^p(\mathbb{T}^N)))\cap L^2(\Omega;L^2(0,T;\break H^1(\mathbb
{T}^N)))$, $\forall
p\in[2,\infty)$, the pair $(\mathbf{1}_{u>\xi},n_1)$ satisfies
\eqref
{eqkinformul}, and consequently $u$ is a kinetic solution to \eqref
{eq}. The measure $n_1$ relates to the diffusion term in \eqref{eq}
and so is called parabolic dissipative measure.

We proceed with two related definitions.

\begin{defin}[(Young measure)]
Let $(X,\lambda)$ be a finite measure space. A~mapping $\nu$ from $X$
to the set of probability measures on $\mathbb{R}$ is said to be a Young
measure if, for all $\psi\in C_b(\mathbb{R})$, the map $z\mapsto\nu
_z(\psi
)$ from $X$ into $\mathbb{R}$ is measurable. We say that a Young
measure $\nu
$ vanishes at infinity if, for all $p\geq1$,
\[
\int_X\!\int_\mathbb{R}|
\xi|^p\,\mathrm{d}\nu_z(\xi)\,\mathrm {d}\lambda(z)<\infty.
\]
\end{defin}

\begin{defin}[(Kinetic function)]
Let $(X,\lambda)$ be a finite measure space. A~measurable function
$f\dvtx  X\times\mathbb{R}\rightarrow[0,1]$ is said to be a kinetic
function if
there exists a Young measure $\nu$ on $X$ vanishing at infinity such
that, for $\lambda$-a.e. $z\in X$, for all $\xi\in\mathbb{R}$,
\[
f(z,\xi)=\nu_z(\xi,\infty).
\]
\end{defin}

\begin{rem}
Note, that if $f$ is a kinetic function then $\partial_\xi f=-\nu$
for $\lambda$-a.e. $z\in X$. Similarly, let $u$ be a kinetic solution
of \eqref{eq} and consider $f=\mathbf{1}_{u >\xi}$. We have $
\partial
_\xi f=-\delta_{u=\xi}$, where $\nu=\delta_{u=\xi}$ is a Young
measure on $\Omega\times[0,T]\times\mathbb{T}^N$.
Therefore, \eqref{eqkinformul} can be rewritten as follows: for all
$\varphi\in C_c^\infty([0,T)\times\mathbb{T}^N\times\mathbb{R})$,
$\mathbb{P}$-a.s.,
%
\begin{eqnarray*}
&& \int_0^T  \bigl\langle f(t),
\partial_t\varphi(t) \bigr\rangle\,\mathrm{d} t+ \bigl\langle
f_0,\varphi(0) \bigr\rangle+\int_0^T
\bigl\langle f(t),b\cdot\nabla\varphi(t) \bigr\rangle\,\mathrm{d}t
\\
&&\quad {}+\int_0^T \bigl\langle f(t),A\dvtx
\mathrm{D}^2\varphi (t) \bigr\rangle\,\mathrm{d}t
\\
&&\qquad=-\sum_{k\geq1}\int_0^T
\!\!\int_{\mathbb{T}^N}\!\int_\mathbb {R}g_k(x,
\xi)\varphi (t,x,\xi)\,\mathrm{d}\nu_{t,x}(\xi)\,\mathrm{d}x\,\mathrm{d}
\beta _k(t)
\\
&&\qquad\quad {}-\frac{1}{2}\int_0^T\!\!\int
_{\mathbb{T}^N}\!\int_\mathbb {R}G^2(x,\xi
)\,\partial_\xi\varphi(t,x,\xi)\,\mathrm{d}\nu_{t,x}(\xi)\,
\mathrm {d}x\,\mathrm{d} t+m(\partial_\xi\varphi).
\end{eqnarray*}
%
%
For a general kinetic function $f$ with corresponding Young measure
$\nu$, the above formulation leads to the notion of generalized
kinetic solution as introduced in \cite{debus}. Although this concept
is not established here, the notation will be used throughout the
paper, that is, we will often write $\nu_{t,x}(\xi)$ instead of
$\delta_{u(t,x)=\xi}$.
\end{rem}

\subsection{Derivation of the kinetic formulation}
\label{subsecformulation}

Let us now clarify that the kinetic formulation \eqref{eqkinformul}
represents a reasonable way to weaken the original model problem \eqref
{eq}. In particular, we show that if $u$ is a weak solution to \eqref
{eq} such that $u\in L^p(\Omega;C([0,T];L^p(\mathbb{T}^N)))\cap
L^2(\Omega
;L^2(0,T;H^1(\mathbb{T}^N)))$, $\forall p\in[2,\infty)$, then
$f=\mathbf{1}
_{u>\xi}$ satisfies
\[
\mathrm{d}f+b\cdot\nabla f\,\mathrm{d}t-A\dvtx  \mathrm{D}^2 f\,\mathrm
{d}t=\delta_{u=\xi} \Phi\,\mathrm{d}W+\partial_\xi
\bigl(n_1-\tfrac{1}{2}G^2\delta _{u=\xi
}
\bigr)\,\mathrm{d}t
\]
in the sense of $\mathcal{D}'(\mathbb{T}^N\times\mathbb{R})$, where
\[
\mathrm{d}n_1(t,x,\xi)=\bigl|\sigma(u)\nabla u\bigr|^2\,\mathrm{d}
\delta _{u=\xi}\,\mathrm{d} x\,\mathrm{d}t.
\]
Indeed, it follows from Proposition~\ref{propito}, for $\varphi\in
C^2(\mathbb{R})$, $\psi\in C^1(\mathbb{T}^N)$,
\begin{eqnarray*}
\bigl\langle\varphi\bigl(u(t)\bigr),\psi \bigr\rangle&=&  \bigl\langle\varphi
(u_0),\psi \bigr\rangle-\int_0^t
\bigl\langle\varphi '(u)\operatorname{div} \bigl(B(u) \bigr),\psi
\bigr\rangle\,\mathrm{d}s
\\
&&{}-\int_0^t \bigl\langle\varphi''(u)
\nabla u\cdot \bigl(A(u)\nabla u \bigr),\psi \bigr\rangle\,\mathrm{d}s
\\
&&{}+\int_0^t \bigl\langle
\operatorname{div} \bigl(\varphi'(u) A(u)\nabla u \bigr),\psi \bigr
\rangle\,\mathrm{d}s
\\
&&{}+\sum_{k\geq1}\int_0^t
\bigl\langle\varphi'(u)g_k(u),\psi \bigr\rangle\,
\mathrm{d}\beta_k(s)
\\
&&{}+\frac{1}{2}\int_0^t \bigl\langle
\varphi''\bigl(u(s)\bigr)G^2(u),\psi \bigr
\rangle\,\mathrm{d}s.
\end{eqnarray*}
Afterward, we proceed term by term and employ the chain rule for
functions from Sobolev spaces. We obtain the following equalities that
hold true in $\mathcal{D}'(\mathbb{T}^N)$:
\begin{eqnarray*}
\varphi'(u)\operatorname{div} \bigl(B(u) \bigr) &=&
\varphi'(u)b(u)\cdot \nabla u
\\
&=& \operatorname{div} \biggl(\int_{-\infty}^{u}b(
\xi )\varphi '(\xi)\,\mathrm{d}\xi \biggr)=\operatorname{div}\bigl
\langle b\mathbf {1}_{u>\xi},\varphi'\bigr\rangle
_\xi,
\\
\varphi''(u)\nabla u\cdot \bigl(A(u)\nabla u
\bigr)&=& - \bigl\langle \partial_\xi n_1,
\varphi' \bigr\rangle_\xi,
\\
\operatorname{div} \bigl(\varphi'(u)A(u)\nabla u \bigr)&=& \mathrm
{D}^2\dvtx \biggl(\int_{-\infty}^u A(\xi)
\varphi'(\xi)\,\mathrm{d}\xi \biggr)=\mathrm {D}^2\dvtx \bigl
\langle A\mathbf{1}_{u>\xi},\varphi'\bigr
\rangle_\xi, %
\\
%
\varphi'(u)g_k(u)&=& \bigl\langle g_k
\delta_{u=\xi},\varphi'\bigr\rangle_\xi,
\\
\varphi''(u)G^2(u) &=& \bigl
\langle G^2\delta_{u=\xi},\varphi''
\bigr\rangle_\xi =- \bigl\langle\partial_\xi
\bigl(G^2\delta_{u=\xi}\bigr),\varphi' \bigr
\rangle_\xi.
\end{eqnarray*}
Moreover,
\[
\bigl\langle\varphi\bigl(u(t)\bigr),\psi \bigr\rangle= \bigl\langle\mathbf{1}
_{u(t)>\xi},\varphi'\psi \bigr\rangle_{x,\xi}
\]
hence setting $\varphi(\xi)=\int_{-\infty}^\xi\phi(\zeta)\,
\mathrm{d}
\zeta$ for some $\phi\in C_c^\infty(\mathbb{R})$ yields the claim.

\subsection{The main result}

To conclude this section, we state our main result.

\begin{thm}\label{thmmain}
Let $u_0\in L^p(\Omega;L^p(\mathbb{T}^N))$, for all $p\in[1,\infty)$.
Under the above assumptions, there exists a unique kinetic solution to
\eqref{eq} and it has almost surely continuous trajectories in
$L^p(\mathbb{T}^N)$, for all $p\in[1,\infty)$. Moreover, if $u_1,
u_2$ are
kinetic solutions to \eqref{eq} with initial data $u_{1,0}$ and
$u_{2,0}$, respectively, then for all $t\in[0,T]$
\[
\mathbb{E}\bigl\|u_1(t)-u_2(t)\bigr\|_{L^1(\mathbb{T}^N)}
\leq\mathbb{E}\| u_{1,0}-u_{2,0}\| _{L^1(\mathbb{T}^N)}.
\]
\end{thm}

\section{Comparison principle}
\label{seccomparison}

Let us start with the question of uniqueness. As the first step, we
follow the approach of \cite{debus} and \cite{hof} and obtain an
auxiliary property of kinetic solutions, which will be useful later on
in the proof of the comparison principle in\vspace*{-2pt} Theorem~\ref{uniqueness}.

\begin{prop}[(Left- and right-continuous representatives)]\label{limits}
Let $u$ be a kinetic solution to \eqref{eq}. Then $f=\mathbf
{1}_{u>\xi}$
admits representatives $f^-$ and $f^+$ which are almost surely left-
and right-continuous, respectively, at all points $t^*\in[0,T]$ in the
sense of distributions over $\mathbb{T}^N\times\mathbb{R}$. More
precisely, for
all $t^*\in[0,T]$ there exist kinetic functions $f^{*,\pm}$ on $
\Omega\times\mathbb{T}^N\times\mathbb{R}$ such that setting $f^\pm
(t^*)=f^{*,\pm}$ yields $f^\pm=f$ almost everywhere\vspace*{-2pt} and
\[
\bigl\langle f^\pm\bigl(t^*\pm \varepsilon\bigr),\psi \bigr\rangle
\longrightarrow \bigl\langle f^{\pm}\bigl(t^*\bigr),\psi \bigr\rangle,\qquad
\varepsilon\downarrow 0,  \forall\psi\in C^2_c\bigl(\mathbb
{T}^N\times\mathbb{R} \bigr), \mathbb{P}\mbox{-a.s.}
\]
Moreover, $f^{+}=f^{-}$ for all $t^*\in[0,T]$ except for some at most
countable\vspace*{-2pt} set.
\end{prop}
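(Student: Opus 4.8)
The plan is to establish the existence of the left- and right-continuous representatives by exploiting the kinetic equation \eqref{eq:kinformul} directly, treating it as an evolution equation for the distribution-valued process $t\mapsto f(t)$. First I would fix a countable set $\mathcal{D}$ of test functions $\psi\in C^2_c(\mt^N\times\mr)$ that is dense in an appropriate sense, and for each such $\psi$ consider the real-valued process $t\mapsto\langle f(t),\psi\rangle$. Reading off \eqref{eq:kinformul} with $\varphi(t,x,\xi)=\theta(t)\psi(x,\xi)$ for $\theta\in C_c^\infty([0,T))$, one sees that, away from the stochastic integral, this map agrees almost everywhere in $t$ with a function of bounded variation: the drift terms $\langle f,b\cdot\nabla\psi\rangle$ and $\langle f,A:\totdif^2\psi\rangle$ are bounded (using $f\in[0,1]$ and the integrability of $u$), the measure contribution $m(\partial_\xi(\theta\psi))$ has bounded total variation in $t$ by Definition~\ref{mees}, and the $\frac12 G^2\partial_\xi\psi$ term is integrable by \eqref{linrust}. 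The genuinely time-irregular part is the martingale $t\mapsto\sum_k\int_0^t\langle g_k\delta_{u=\xi},\psi\rangle\,\dif\beta_k$, which by the Burkholder--Davis--Gundy inequality and \eqref{linrust} admits an almost surely continuous modification.

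Consequently, for each fixed $\psi\in\mathcal{D}$, the process $\langle f(t),\psi\rangle$ has, almost surely, a modification that is the sum of a càdlàg function of bounded variation and a continuous martingale, so it possesses left and right limits at every $t^*\in[0,T]$. The next step is to upgrade this pointwise-in-$\psi$ statement to a genuine distribution-valued left/right limit. Here I would use that $f(t)$ takes values in the order interval $[0,1]$ uniformly, so that for each $t^*$ and each sign the limits $\lim_{\varepsilon\downarrow0}\langle f(t^*\pm\varepsilon),\psi\rangle$ define, by density of $\mathcal{D}$ and a uniform equicontinuity/boundedness argument, a bounded linear functional on $C^2_c(\mt^N\times\mr)$; monotonicity of $\xi\mapsto f(t,x,\xi)$ (it is nonincreasing, being of the form $\nu_{t,x}(\xi,\infty)$) then identifies this functional with integration against a kinetic function $f^{*,\pm}$, i.e. there is an associated Young measure $\nu^{*,\pm}$. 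One checks the vanishing-at-infinity property using the uniform $L^p$ bounds on $u$ from the definition of kinetic solution.

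Defining $f^\pm(t)$ to equal $f$ off a null set of times and to equal $f^{*,\pm}$ at the exceptional points yields the asserted one-sided continuity, and $f^\pm=f$ almost everywhere follows because the set of $t$ where the BV part jumps is at most countable. Finally, to see that $f^+=f^-$ except on an at most countable set, I would note that the discrepancy between the right and left limits at $t^*$ is governed entirely by the jump of the bounded-variation part (the martingale being continuous), and a càdlàg BV function has at most countably many jumps; summing over the countable dense family $\mathcal{D}$ preserves countability. The main obstacle I anticipate is the passage from the scalar statements "for each $\psi$, limits exist a.s." to the simultaneous statement "a.s., for all $t^*$ and all $\psi$ the distributional limits exist and are kinetic functions": this requires choosing the exceptional null set uniformly in $\psi$, which is where the countable dense family together with the uniform $[0,1]$-bound and the BDG estimate must be combined carefully, exactly as carried out in \cite[Proposition 3.1]{hof}.
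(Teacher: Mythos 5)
Your proposal is correct and follows essentially the same route as the proof the paper points to (Proposition 3.1 of \cite{hof}, itself modeled on Debussche--Vovelle): test the kinetic equation \eqref{eq:kinformul} against a countable family of test functions, split the resulting scalar processes into absolutely continuous drift terms, a bounded-variation contribution from the kinetic measure $m$, and a continuous martingale, then use the uniform bound $0\leq f\leq 1$ and density to pass to distribution-valued one-sided limits, identify them as kinetic functions via monotonicity in $\xi$ and the $L^p$ bounds, and deduce countability of the exceptional set from the finitely many (in the countable sense) atoms of the BV part. No genuine gap; the points you flag as delicate (uniformity of the null set in $\psi$, vanishing at infinity of the limit Young measures) are exactly the ones handled in the cited proof.
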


\begin{pf}
A detailed proof of this result can be found in \cite{hof}, Proposition~3.1.
\end{pf}\vspace*{-3pt}

From now on, we will work with these two fixed representatives of $f$
and we can take any of them in an
integral with respect to time or in a stochastic integral.

As the next step toward the proof of uniqueness, we need a technical
proposition relating two kinetic solutions of \eqref{eq}.
We will also use the following notation: if $f\dvtx  X\times\mathbb
{R}\rightarrow
[0,1]$ is a kinetic function, we denote by $\bar{f}$ the conjugate
function $\bar{f}=1-f$.\vspace*{-2pt}

\begin{prop}[(Doubling of variables)]\label{propdoubling}
Let $u_1,u_2$ be kinetic solutions to~\eqref{eq} and\vspace*{1.5pt} denote
$f_1=\mathbf{1}
_{u_1>\xi}, f_2=\mathbf{1}_{u_2>\xi}$. Then for all $t\in[0,T]$
and any
nonnegative functions $\varrho\in C^\infty(\mathbb{T}^N)$, $\psi
\in
C^\infty_c(\mathbb{R})$ we have
\begin{eqnarray*}
&& \mathbb{E} \int_{(\mathbb{T}^N)^2}\!\int
_{\mathbb{R}^2}\varrho (x-y)\psi(\xi-\zeta )f_1^{\pm}(x,t,
\xi)\bar{f}_2^{\pm}(y,t,\zeta)\,\mathrm{d}\xi\, \mathrm{d}
\zeta\,\mathrm{d}x\,\mathrm{d}y
\\[-2pt]
&&\qquad \leq\mathbb{E}\int_{(\mathbb{T}^N)^2}\!\int_{\mathbb{R}^2}
\varrho (x-y)\psi(\xi -\zeta)f_{1,0}(x,\xi)\bar{f}_{2,0}(y,
\zeta)\,\mathrm{d}\xi\, \mathrm{d}\zeta \,\mathrm{d}x\,\mathrm{d}y\\[-2pt]
&&\qquad\quad{}+\mathrm{I}+
\mathrm{J}+\mathrm{K},
\end{eqnarray*}
where\vspace*{-3pt}
\begin{eqnarray*}
\mathrm{I} &=&  \mathbb{E}\int_0^t\!\int
_{(\mathbb{T}^N)^2}\!\int_{\mathbb{R}^2}f_1\bar{f}
_2 \bigl(b(\xi)-b(\zeta) \bigr)\cdotp\nabla_x
\varrho(x-y)\psi(\xi -\zeta)\,\mathrm{d}\xi\, \mathrm{d}\zeta\,\mathrm{d}x\,
\mathrm{d}y\,\mathrm{d}s,
\\[-2pt]
\mathrm{J}&=& \mathbb{E}\int_0^t\!\int
_{(\mathbb{T}^N)^2}\!\int_{\mathbb{R}^2}f_1\bar{f}
_2 \bigl(A(\xi)+A(\zeta) \bigr)\dvtx  \mathrm{D}^2_x
\varrho(x-y)\psi(\xi -\zeta)\, \mathrm{d}\xi\,\mathrm{d}\zeta\,\mathrm{d}x\,
\mathrm{d}y\, \mathrm{d}s
\\[-2pt]
&&{}-\mathbb{E}\int_0^t\!\int
_{(\mathbb{T}^N)^2}\!\int_{\mathbb
{R}^2}\varrho (x-y)\psi(\xi-
\zeta)\, \mathrm{d}\nu^ { 1 } _ { x , s } (\xi)\,\mathrm{d}x\,
\mathrm{d}n_{2,1}(y,s,\zeta)
\\[-2pt]
&&{}-\mathbb{E}\int_0^t\!\int
_{(\mathbb{T}^N)^2}\!\int_{\mathbb
{R}^2}\varrho (x-y)\psi(\xi-
\zeta)\, \mathrm{d}\nu^{2}_{y,s} (\zeta)\,\mathrm{d}y\,
\mathrm{d}n_{1,1}(x,s,\xi),
\\[-2pt]
\mathrm{K} &=& \frac{1}{2}\mathbb{E}\int_0^t
\!\int_{(\mathbb
{T}^N)^2}\!\int_{\mathbb{R}^2}
\varrho(x-y)\psi(\xi-\zeta) \\[-2pt]
&&\qquad\qquad\hspace*{29pt}{}\times\sum_{k\geq1}
\bigl|g_k(x,\xi)-g_k(y, \zeta)\bigr|^2\,\mathrm{d}
\nu^1_{x,s}(\xi)\,\mathrm{d}\nu ^2_{y,s}(
\zeta)\,\mathrm{d}x\, \mathrm{d}y\,\mathrm{d}s.
\end{eqnarray*}
\end{prop}

\begin{pf}
The proof follows the ideas developed in \cite{debus}, Proposition~9,
and \cite{hof}, Proposition~3.2, and is left to the reader.
\end{pf}
%

\begin{thm}[(Comparison principle)]\label{uniqueness}
Let $u$ be a kinetic solution to \eqref{eq}. Then there exist $u^+$
and $u^-$, representatives of $u$, such that, for all $t\in[0,T]$,
$f^\pm(t,x,\xi)=\mathbf{1}_{u^{\pm}(t,x)>\xi}$ for a.e. $(\omega
,x,\xi)$. Moreover, if $ u_1,u_2$ are kinetic solutions to \eqref{eq} with
initial data $u_{1,0}$ and $u_{2,0}$, respectively, then for all $t\in
[0,T]$ we have
%
\begin{equation}
\label{comparison}
\mathbb{E}\bigl\|u_1^{\pm}(t)-u_2^{\pm}(t)
\bigr\|_{L^1(\mathbb{T}^N)}\leq \mathbb{E}\| u_{1,0}-u_{2,0}
\|_{L^1(\mathbb{T}^N)}.
\end{equation}
\end{thm}

\begin{pf}
Let $(\varrho_\varepsilon), (\psi_\delta)$ be approximations to
the identity on $\mathbb{T}^N$ and $\mathbb{R}$, respectively, that
is, let
$\varrho\in C^\infty(\mathbb{T}^N), \psi\in C^\infty_c(\mathbb
{R})$ be
symmetric nonnegative functions such as
$\int_{\mathbb{T}^N}\varrho=1, \int_\mathbb{R}\psi=1$
and $\operatorname{supp}\psi\subset(-1,1)$. We define
\begin{eqnarray*}
\varrho_\varepsilon(x) &=& \frac{1}{\varepsilon^N} \varrho \biggl(\frac
{x}{\varepsilon}
\biggr), \\
\psi_\delta(\xi) &=& \frac{1}{\delta} \psi \biggl(\frac{\xi}{\delta}
\biggr).
\end{eqnarray*}
Then
%
\begin{eqnarray*}
&& \mathbb{E}\int_{\mathbb{T}^N}\!\int_\mathbb{R}f_1^{\pm}(x,t,
\xi )\bar{f}_2^{\pm
}(x,t,\xi)\,\mathrm{d}\xi\,\mathrm{d}x
\\
&&\qquad =\mathbb{E}\int_{(\mathbb{T}^N)^2}\!\int_{\mathbb{R}^2}\varrho
_\varepsilon(x-y)\psi _\delta(\xi-\zeta)f_1^{\pm}(x,t,
\xi)\bar{f}_2^{\pm}(y,t,\zeta)\, \mathrm{d}\xi\,\mathrm{d}
\zeta\,\mathrm{d}x \,\mathrm{d}y\\
&&\qquad\quad{}+\eta _t(\varepsilon,\delta),
\end{eqnarray*}
%
%
where $\lim_{\varepsilon,\delta\rightarrow0}\eta_t(\varepsilon
,\delta)=0$. With regard to Proposition~\ref{propdoubling} we need
to find suitable bounds for terms $\mathrm{I}, \mathrm{J}, \mathrm{K}$.

Since $b$ has at most polynomial growth, there exist $C>0, p>1$ such that
\begin{eqnarray*}
\bigl|b(\xi)-b(\zeta) \bigr| &\leq & \varGamma(\xi,\zeta)|\xi-\zeta |,\\
\varGamma(\xi,\zeta) &\leq &
C \bigl(1+|\xi|^{p-1}+|\zeta |^{p-1} \bigr).
\end{eqnarray*}
Hence,
\[
|\mathrm{I}|\leq\mathbb{E}\int_0^t\!\int
_{(\mathbb{T}^N)^2}\!\int_{\mathbb{R}
^2}f_1
\bar{f}_2\varGamma(\xi,\zeta)|\xi-\zeta|\psi_\delta(\xi -
\zeta)\,\mathrm{d}\xi\,\mathrm{d}\zeta \bigl|\nabla_x\varrho
_\varepsilon (x-y) \bigr|\,\mathrm{d}x\,\mathrm{d}y\,\mathrm{d}s.
\]
As the next step, we apply integration by parts with respect to $\zeta, \xi$. Focusing only on the relevant integrals, we get
\begin{eqnarray*}
&& \int_\mathbb{R}f_1(\xi)
\int_\mathbb{R}\bar{f}_2(\zeta)\varGamma (\xi,\zeta
)|\xi-\zeta|\psi_\delta(\xi-\zeta)\,\mathrm{d}\zeta\,\mathrm {d}\xi
\\
&&\qquad =\int_\mathbb{R}f_1(\xi)\int_\mathbb{R}
\varGamma\bigl(\xi,\zeta '\bigr)\bigl|\xi-\zeta'\bigr|
\psi_\delta\bigl(\xi-\zeta'\bigr)\,\mathrm{d}
\zeta'\,\mathrm{d}\xi
\\
&&\quad\qquad{}-\int_{\mathbb{R}^2} f_1(\xi)\int
_{-\infty}^\zeta \varGamma\bigl(\xi ,\zeta'
\bigr)\bigl|\xi-\zeta'\bigr|\psi_\delta\bigl(\xi-\zeta'
\bigr)\,\mathrm{d}\zeta'\, \mathrm{d} \xi\,\mathrm{d}
\nu^2_{y,s}(\zeta)
\\
&&\qquad=\int_{\mathbb{R}^2}f_1(\xi)\int^{\infty}_\zeta
\varGamma\bigl(\xi ,\zeta '\bigr)\bigl|\xi-\zeta'\bigr|
\psi_\delta\bigl(\xi-\zeta'\bigr)\,\mathrm{d}
\zeta'\, \mathrm{d}\xi\, \mathrm{d}\nu^2_{y,s}(
\zeta)
\\
&&\qquad=\int_{\mathbb{R}^2}\Upsilon(\xi,\zeta)\,\mathrm{d}\nu
^1_{x,s}(\xi)\, \mathrm{d}\nu^2_{y,s}(
\zeta),
\end{eqnarray*}
where
\[
\Upsilon(\xi,\zeta)=\int_{-\infty}^\xi\!\int
_\zeta^\infty \varGamma\bigl(\xi',
\zeta'\bigr)\bigl|\xi'-\zeta'\bigr|
\psi_\delta\bigl(\xi'-\zeta'\bigr)\,
\mathrm{d}\zeta'\,\mathrm{d}\xi'.
\]
Therefore, we get
\[
|\mathrm{I}|\leq\mathbb{E}\int_0^t\!\int
_{(\mathbb{T}^N)^2}\!\int_{\mathbb{R}
^2}\Upsilon(\xi,\zeta)\,
\mathrm{d}\nu^1_{x,s}(\xi)\,\mathrm {d}\nu
^2_{y,s}(\zeta) \bigl|\nabla_x\varrho_\varepsilon(x-y)
\bigr|\, \mathrm{d} x\,\mathrm{d}y\,\mathrm{d}s.
\]
The function $\Upsilon$ can be estimated using the substitution
$\xi''=\xi'-\zeta'$
%
\begin{eqnarray*}
\Upsilon(\xi,\zeta)&=& \int_\zeta^\infty\!\int
_{|\xi''|<\delta
, \xi''<\xi-\zeta'}\varGamma\bigl(\xi''+
\zeta',\zeta'\bigr)\bigl|\xi''\bigr|
\psi _\delta\bigl(\xi''\bigr)\,\mathrm{d}
\xi''\,\mathrm{d}\zeta'
\\
&\leq &  C \delta\int_\zeta^{\xi+\delta}\max
_{|\xi''|<\delta, \xi
''<\xi-\zeta'}\varGamma\bigl(\xi''+
\zeta',\zeta'\bigr)\,\mathrm{d}\zeta'
\\
&\leq &  C \delta\int_\zeta^{\xi+\delta} \bigl(1+|
\xi|^{p-1}+\bigl|\zeta '\bigr|^{p-1} \bigr)\,\mathrm{d}
\zeta'
\\
&\leq &  C\delta \bigl(1+|\xi|^{p}+|\zeta|^{p} \bigr)
\end{eqnarray*}
%
%
so
\[
|\mathrm{I}|\leq C t\delta\varepsilon^{-1}.
\]


In order to estimate the term $\mathrm{J}$, we observe that
%
\begin{eqnarray*}
\mathrm{J}&=& \mathbb{E}\int_0^t\!\int
_{(\mathbb{T}^N)^2}\!\int_{\mathbb{R}^2}f_1\bar{f}
_2 \bigl(\sigma(\xi)-\sigma(\zeta) \bigr)^2\dvtx
\mathrm{D}^2_x\varrho _\varepsilon(x-y)
\psi_\delta(\xi-\zeta)\, \mathrm{d}\xi\,\mathrm{d}\zeta\,\mathrm{d}x\,
\mathrm{d}y\, \mathrm{d}s
\\
&&{}+2 \mathbb{E}\int_0^t\!\int
_{(\mathbb{T}^N)^2}\!\int_{\mathbb
{R}^2}f_1\bar{f}
_2 \sigma(\xi)\sigma(\zeta)\dvtx \mathrm{D}^2_x
\varrho_\varepsilon (x-y)\psi_\delta(\xi-\zeta)\, \mathrm{d}\xi\,
\mathrm{d}\zeta\,\mathrm{d}x\, \mathrm{d}y\, \mathrm{d}s
\\
&&{}-\mathbb{E}\int_0^t\!\int
_{(\mathbb{T}^N)^2}\!\int_{\mathbb
{R}^2}\varrho
_\varepsilon(x-y)\psi_\delta(\xi-\zeta)\, \mathrm{d}\nu^ { 1 }
_ { x , s } (\xi) \,\mathrm{d}x\,\mathrm{d}n_{2,1}(y,s,\zeta)
\\
&&{}-\mathbb{E}\int_0^t\!\int
_{(\mathbb{T}^N)^2}\!\int_{\mathbb
{R}^2}\varrho
_\varepsilon(x-y)\psi_\delta(\xi-\zeta)\, \mathrm{d}
\nu^{2}_{y,s} (\zeta) \,\mathrm{d}y\,\mathrm{d}n_{1,1}(x,s,
\xi)
\\
&=&\mathrm{J}_1+\mathrm{J}_2+\mathrm{J}_3+
\mathrm{J}_4.
\end{eqnarray*}
%
%
Since $\sigma$ is locally $\gamma$-H\"older continuous due to \eqref
{sigma}, it holds
\[
|\mathrm{J}_1|\leq Ct\delta^{2\gamma}\varepsilon^{-2}.
%
\]
Next, we will show that $\mathrm{J}_2+\mathrm{J}_3+\mathrm{J}_4\leq0$.
From the definition of the parabolic dissipative measure in Definition~\ref{kinsol}, we have
%
\begin{eqnarray*}
\mathrm{J}_3+\mathrm{J}_4&=& - \mathbb{E}\int
_0^t\!\int_{(\mathbb{T}
^N)^2}
\varrho_\varepsilon(x-y)\psi_\delta(u_1-u_2) \biggl|
\operatorname{div} _y\int^{u_2}_0
\sigma(\zeta)\,\mathrm{d}\zeta \biggr|^2\,\mathrm {d}x\, \mathrm{d}y\,
\mathrm{d}s
\\
&&{}- \mathbb{E}\int_0^t\!\int
_{(\mathbb{T}^N)^2}\varrho _\varepsilon (x-y)\psi_\delta(u_1-u_2)
\biggl|\operatorname{div}_x\int^{u_1}_0
\sigma(\xi)\, \mathrm{d}\xi \biggr|^2\mathrm{d}x\, \mathrm{d}y\,\mathrm{d}s.
\end{eqnarray*}
%
%
Moreover, due to the chain rule formula \eqref{eqchainrule} we deduce
%
\begin{eqnarray*}
\operatorname{div}\int_\mathbb{R}f\phi(\xi)\sigma(\xi)\,
\mathrm {d}\xi&=& \operatorname{div}\int_\mathbb{R}
\chi_f\phi(\xi)\sigma(\xi)\,\mathrm{d}\xi=\operatorname {div}\int
_0^u\phi(\xi )\sigma(\xi)\,\mathrm{d}\xi
\\
&=&\phi(u)\operatorname{div}\int_0^{u}\sigma(
\xi)\,\mathrm{d}\xi,
\end{eqnarray*}
%
%
where $\chi_{f}=\mathbf{1}_{u>\xi}-\mathbf{1}_{0>\xi}$.
With this in hand, we obtain
%
\begin{eqnarray*}
\mathrm{J}_2&=& 2 \mathbb{E}\int_0^t
\!\int_{(\mathbb{T}^N)^2}\!\int_{\mathbb{R}
^2}(\nabla_x
f_1)^*\sigma(\xi)\sigma(\zeta) (\nabla_y{f}
_2)\varrho_\varepsilon(x-y)\psi_\delta(\xi-\zeta)\,
\mathrm{d}\xi\,\mathrm{d}\zeta\,\mathrm{d}x\, \mathrm{d}y\, \mathrm{d}s
\\
&=& 2 \mathbb{E}\int_0^t\!\int
_{(\mathbb{T}^N)^2}\varrho_\varepsilon (x-y)\\
&&\hspace*{32pt}\qquad{}\times\operatorname{div}
_y\int_{0}^{u_2}\sigma(\zeta)\cdot
\operatorname{div}_x\int_{0}^{u_1}
\sigma (\xi)\psi_\delta(\xi-\zeta)\,\mathrm{d}\xi\,\mathrm{d}\zeta\,
\mathrm{d}x\, \mathrm{d} y\,\mathrm{d}s
\\
&=& 2 \mathbb{E}\int_0^t\!\int
_{(\mathbb{T}^N)^2}\varrho_\varepsilon (x-y)\psi _\delta(u_1-u_2)
\\
&&\hspace*{32pt}\qquad{}\times\operatorname{div}_x\int_{0}^{u_1}
\sigma(\xi)\, \mathrm{d}\xi\cdot \operatorname{div}_y\int
_{0}^{u_2}\sigma(\zeta)\,\mathrm{d}\zeta\,
\mathrm{d}x\,\mathrm{d}y\, \mathrm{d}s.
\end{eqnarray*}
%
%
And, therefore,
%
\begin{eqnarray*}
\mathrm{J}_2+\mathrm{J}_3+\mathrm{J}_4 &=& -
\mathbb{E}\int_0^t\!\int_{(\mathbb{T}^N)^2}
\varrho_\varepsilon(x-y)\psi_\delta(u_1-u_2)
\\
&&\hspace*{35pt}\qquad{}\times \biggl|\operatorname{div}_x\int_{0}^{u_1}
\sigma(\xi)\, \mathrm{d}\xi-\operatorname{div} _y\int
_{0}^{u_2}\sigma(\zeta)\,\mathrm{d}\zeta
\biggr|^2\,\mathrm {d}x\,\mathrm{d}y\, \mathrm{d}s\\
&\leq & 0.
\end{eqnarray*}

The last term is, due to \eqref{skorolip}, bounded as follows:
%
\begin{eqnarray*}
\mathrm{K}&\leq &  C \mathbb{E}\int_0^t\!\int
_{(\mathbb{T}^N)^2} \varrho _\varepsilon(x-y)|x-y|^2\int
_{\mathbb{R}^2}\psi_\delta(\xi -\zeta)\, \mathrm{d}
\nu^1_{x,s}(\xi)\,\mathrm{d}\nu^2_{y,s}(
\zeta)\,\mathrm {d}x\,\mathrm{d}y\, \mathrm{d}s
\\
&&{}+C \mathbb{E}\int_0^t\!\int
_{(\mathbb{T}^N)^2} \varrho _\varepsilon(x-y)\\
&&\hspace*{48pt}\qquad{}\times \int_{\mathbb{R}^2}
\psi_\delta(\xi-\zeta)|\xi-\zeta|h\bigl(|\xi -\zeta|\bigr)\, \mathrm{d}
\nu^1_{x,s}(\xi)\,\mathrm{d}\nu^2_{y,s}(
\zeta)\,\mathrm {d}x\,\mathrm{d}y\, \mathrm{d}s
\\
&\leq & Ct\delta^{-1}
\varepsilon^2+Ct h(\delta).
\end{eqnarray*}
%
%

As a consequence, we deduce for all $t\in[0,T]$
%
\begin{eqnarray*}
&& \mathbb{E}\int_{\mathbb{T}^N}\!\int_\mathbb{R}f_1^{\pm}(x,t,
\xi )\bar{f}_2^{\pm
}(x,t,\xi)\,\mathrm{d}\xi\,\mathrm{d}x
\\
&&\qquad\leq\mathbb{E}\int_{(\mathbb{T}^N)^2}\!\int_{\mathbb{R}^2}
\varrho _\varepsilon (x-y)\psi_\delta(\xi-\zeta)f_{1,0}(x,
\xi)\bar{f}_{2,0}(y,\zeta)\, \mathrm{d}\xi\,\mathrm{d}\zeta\,\mathrm{d}x
\,\mathrm{d}y
\\
&&\quad\qquad{}+Ct\delta\varepsilon^{-1}+Ct\delta^{2\gamma}\varepsilon
^{-2}+Ct\delta^{-1}\varepsilon^2+Ct h(\delta)+
\eta_t(\varepsilon ,\delta).
\end{eqnarray*}
%
%
Taking $\delta=\varepsilon^{\beta}$ with $\beta\in(1/\gamma,2)$
and letting $\varepsilon\rightarrow0$ yields
\[
\mathbb{E}\int_{\mathbb{T}^N}\!\int_\mathbb{R}f_1^{\pm}(t)
\bar {f}_2^{\pm}(t)\,\mathrm{d} \xi\,\mathrm{d}x\leq
\mathbb{E}\int_{\mathbb{T}^N}\!\int_\mathbb
{R}f_{1,0}\bar{f}_{2,0}\, \mathrm{d}\xi\,\mathrm{d}x.
\]
Let us now consider $f_1=f_2=f$. Since $f_0=\mathbf{1}_{u_0>\xi}$ we have
the identity $f_0\bar{f}_0=0$ and, therefore, $f^\pm(1-f^\pm)=0$
a.e. $(\omega,x,\xi)$ and for all $t$. The fact that $f^\pm$ is a
kinetic function and Fubini's theorem then imply that, for any $t\in
[0,T]$, there exists a set $\Sigma_t\subset\Omega\times\mathbb
{T}^N$ of
full measure such that, for $(\omega,x)\in\Sigma_t$, $f^\pm(\omega
,x,t,\xi)\in\{0,1\}$ for a.e. $\xi\in\mathbb{R}$.
Therefore, there exist $u^\pm\dvtx  \Omega\times\mathbb{T}^N\times
[0,T]\rightarrow\mathbb{R}$ such that $f^\pm=\mathbf{1}_{u^\pm>\xi
}$ for a.e.
$(\omega,x,\xi)$ and all $t$. In particular, $u^\pm=\int_\mathbb
{R}(f^\pm
-\mathbf{1}_{0>\xi})\,\mathrm{d}\xi$ for a.e. $(\omega,x)$ and all
$t$. It
follows now from Proposition~\ref{limits} and the identity
\[
|\alpha-\beta|=\int_\mathbb{R}|\mathbf{1}_{\alpha>\xi}-
\mathbf {1}_{\beta>\xi}|\, \mathrm{d}\xi,\qquad\alpha, \beta\in\mathbb{R},
%
\]
that $u^+=u^-=u$ for a.e. $t\in[0,T]$.
Since
\[
\int_\mathbb{R}\mathbf{1}_{u^\pm_1>\xi}\overline{\mathbf
{1}_{u^\pm_2>\xi}}\,\mathrm{d} \xi=\bigl(u^\pm_1-u^\pm_2
\bigr)^+
\]
we obtain the comparison principle \eqref{comparison}.
%
\end{pf}

As a consequence, we obtain the continuity of trajectories in
$L^p(\mathbb{T}
^N)$ whose proof is given in \cite{hof}, Corollary~3.4.

\begin{cor}[(Continuity in time)]\label{cont}
Let $u$ be a kinetic solution to \eqref{eq}. Then there exists a
representative of $u$ which has almost surely continuous trajectories
in $L^p(\mathbb{T}^N)$, for all $p\in[1,\infty)$.
\end{cor}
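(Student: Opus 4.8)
The strategy is to upgrade the one-sided continuity of $f=\ind_{u>\xi}$ in the sense of distributions, provided by Proposition~\ref{limits}, to strong continuity of $u$ in $L^p(\mt^N)$. I work with the representatives $u^\pm$ and $f^\pm=\ind_{u^\pm>\xi}$ constructed in Theorem~\ref{uniqueness}, which coincide for a.e.\ $t$, and with the truncations $\chi_{f^\pm}=f^\pm-\ind_{0>\xi}$. Since $\chi_{f^\pm}(s)-\chi_{f^\pm}(t)=f^\pm(s)-f^\pm(t)$ takes values in $\{-1,0,1\}$, the key identity
\begin{equation*}
\|u^\pm(s)-u^\pm(t)\|_{L^1(\mt^N)}=\|\chi_{f^\pm}(s)-\chi_{f^\pm}(t)\|_{L^2(\mt^N\times\mr)}^2
\end{equation*}
holds, reducing the claim to strong $L^2(\mt^N\times\mr)$-continuity of the one-sided maps $s\mapsto\chi_{f^\pm}(s)$. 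As $L^2$ is a Hilbert space, this in turn amounts to (i) one-sided weak $L^2$-continuity and (ii) continuity of the energies $s\mapsto\|\chi_{f^\pm}(s)\|_{L^2}^2=\|u^\pm(s)\|_{L^1(\mt^N)}$.

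Part (i) is the soft part. The distributional one-sided limits of $f^\pm$ from Proposition~\ref{limits} identify the only possible weak $L^2$ limits, while the defining bound $u\in L^p(\Omega;L^\infty(0,T;L^p(\mt^N)))$ pins down the weak limit and, because the support of $\chi_{f^\pm}(s,x,\cdot)$ lies in $\{|\xi|\le|u^\pm(s,x)|\}$ with $\esssup_s\|u^\pm(s)\|_{L^p}<\infty$ a.s., furnishes the equi-integrability in the velocity variable needed to test against arbitrary $L^2$ functions rather than only smooth compactly supported ones. Hence $\chi_{f^+}(s)\rightharpoonup\chi_{f^+}(t)$ as $s\downarrow t$ and $\chi_{f^-}(s)\rightharpoonup\chi_{f^-}(t)$ as $s\uparrow t$, $\prst$-a.s.

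Part (ii) is the main obstacle. Weak lower semicontinuity already gives $\|u^\pm(t)\|_{L^1(\mt^N)}\le\liminf\|u^\pm(s)\|_{L^1(\mt^N)}$, so what must be proved is the reverse inequality; equivalently, the energy $s\mapsto\|u^\pm(s)\|_{L^1(\mt^N)}$ must not drop across a time, which is exactly the statement that the kinetic measure $m$ (and with it the parabolic dissipation measure $n_1$) does not charge the time-slices $\{s=t\}$. This is where mere weak convergence fails and the strong control obtained above is indispensable; I would establish it by the fine analysis of $m$ underlying the comparison principle, following \cite[Corollary~3.4]{hof}. Granting (ii), strong $L^2$-continuity of $\chi_{f^\pm}$ follows, so $u^+$ is right- and $u^-$ left-continuous in $L^1(\mt^N)$; since they agree off an at most countable set and the energy has no jumps, the two one-sided limits coincide everywhere and $u:=u^+=u^-\in C([0,T];L^1(\mt^N))$ a.s.

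Finally, to reach all exponents I would interpolate: for $p\in(1,\infty)$ the $L^1$-continuity just obtained together with the uniform bound $\esssup_t\|u(t)\|_{L^q}<\infty$ a.s.\ for some $q>p$ yields, by uniform integrability, convergence in $L^p(\mt^N)$, whence the same representative lies in $C([0,T];L^p(\mt^N))$ for every $p\in[1,\infty)$.
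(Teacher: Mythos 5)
Your skeleton contains several correct ingredients, and they do appear in the proof the paper points to (note the paper itself gives no argument here: it defers entirely to \cite[Corollary 3.4]{hof}): the identity $\|u^\pm(s)-u^\pm(t)\|_{L^1(\mt^N)}=\|\chi_{f^\pm}(s)-\chi_{f^\pm}(t)\|_{L^2(\mt^N\times\mr)}^2$, the use of Proposition \ref{limits} to identify the one-sided weak limits, and the interpolation from $L^1$- to $L^p$-continuity via the uniform $L^\infty_t L^q_x$ bounds. The genuine gap is your part (ii). All of the analytic content of the corollary beyond soft functional analysis sits exactly there -- ruling out energy drops, equivalently ruling out time-atoms of the kinetic measure $m$ -- and you do not prove it; you announce that you ``would establish it by the fine analysis of $m$ underlying the comparison principle, following \cite[Corollary 3.4]{hof}.'' Reducing the statement to its hardest step and then citing the reference for that step leaves the proposal incomplete precisely where it matters.

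Moreover, the Radon--Riesz framing (weak convergence $+$ norm convergence $\Rightarrow$ strong convergence) manufactures an obstacle that the structure already in the paper lets you avoid. Theorem \ref{uniqueness} gives that $f^\pm(t)=\ind_{u^\pm(t)>\xi}$ is an indicator for \emph{every} $t\in[0,T]$, and for $\{0,1\}$-valued functions one has pointwise $|f_1-f_2|=f_1(1-f_2)+f_2(1-f_1)$. Hence, once $f^+(t+\varepsilon)\rightharpoonup f^+(t)$ weak-$*$ in $L^\infty$ (Proposition \ref{limits} plus the uniform bound $0\le f\le 1$), one gets for every integrable weight $\psi$
\begin{equation*}
\int_{\mt^N}\int_\mr\big|f^+(t+\varepsilon)-f^+(t)\big|\,\psi\,\dif\xi\,\dif x\longrightarrow 2\int_{\mt^N}\int_\mr f^+(t)\big(1-f^+(t)\big)\,\psi\,\dif\xi\,\dif x=0,
\end{equation*}
and the tails in $\xi$ are uniformly small by Chebyshev and the $L^\infty_tL^p_x$ bounds; so the one-sided convergence is automatically \emph{strong}, no separate energy continuity needed. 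The only remaining issue is the possible jump $u^+(t)\neq u^-(t)$ on the countable exceptional set, and this is where the measure enters, but again in a few lines rather than through ``fine analysis'': testing \eqref{eq:kinformul} with functions concentrating at time $t$ yields $f^+(t)-f^-(t)=\partial_\xi m(\{t\})$ in $\mathcal{D}'(\mt^N\times\mr)$; pairing this with $\psi(x)\varphi_R(\xi)$, where $\varphi_R\uparrow 1$ and $\|\varphi_R'\|_\infty\le C/R$, the right-hand side is bounded by $CR^{-1}\|\psi\|_\infty\, m(\{t\}\times\mt^N\times\mr)\to 0$, while the left-hand side converges (dominated convergence, since $|f^+(t)-f^-(t)|$ is $\xi$-integrable) to $\int_{\mt^N}\psi\,(u^+(t)-u^-(t))\,\dif x$. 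Hence $u^+(t)=u^-(t)$ a.e., the atom vanishes, and continuity follows. Without this argument -- or an actual proof of your step (ii) -- your proposal establishes only the soft part of the corollary.
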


\section{Existence for nondegenerate case---$B$ Lipschitz continuous}
\label{secnondeg}

As the first step toward the existence part of Theorem~\ref{thmmain},
we prove existence of a weak solution to \eqref{eq}
under three additional hypotheses. Recall that once this claim is
verified, Theorem~\ref{thmmain} follows immediately as any weak
solution to \eqref{eq} is also a kinetic solution to \eqref{eq}, due
to Section~\ref{subsecformulation}. Throughout this section, we
suppose that:\vspace*{-2pt}
\begin{longlist}[(H3)]
\item[(H1)] $u_0\in L^p(\Omega;C^5(\mathbb{T}^N))$, for all $p\in
[1,\infty)$,
\item[(H2)] $A$ is positive definite, that is, $A\geq\tau\mathrm{I}$,
\item[(H3)] $B$ is Lipschitz continuous hence it has linear growth
$|B(\xi)|\leq L(1+|\xi|)$.
\end{longlist}
In the following sections, we will show how we may relax all these
assumptions one after the other.

Let\vspace*{-2pt} us approximate \eqref{eq} by
%
\begin{eqnarray}
\mathrm{d}u+\operatorname{div} \bigl(B^{\eta}(u) \bigr)\,
\mathrm {d}t&=& \operatorname{div} \bigl(A^\eta (u)\nabla u \bigr)\,
\mathrm{d}t-\eta\Delta^2u\,\mathrm{d}t+\Phi ^\eta(u)\,
\mathrm{d}W,
\nonumber
\\[-10pt]
\label{eqapprox}
\\[-10pt]
\nonumber
u(0)&= & u_0,
\end{eqnarray}
%
%
where $B^\eta,  A^\eta, \Phi^\eta$ are smooth approximations
of $B, A$ and $\Phi$, respectively, with bounded derivatives. Then
the following existence result holds true.

\begin{thm}
For any $\eta\in(0,1)$, there exists a unique strong solution to~\eqref{eqapprox} that belongs to
\[
L^p\bigl(\Omega;C\bigl([0,T];C^{4,\lambda}\bigl(
\mathbb{T}^N\bigr)\bigr)\bigr)\qquad\forall \lambda\in (0,1), \forall
p\in[1,\infty).
\]
\end{thm}

\begin{pf}
The second-order term in \eqref{eqapprox} can be rewritten in the
following\vspace*{-2pt} way:
\[
\operatorname{div} \bigl(A^\eta(u)\nabla u \bigr)=\sum
_{i,j=1}^N\partial ^2_{{x_i}{x_j}}
\bar{A}^\eta_{ij}(u),\qquad\bar{A}^\eta(\xi)=\int
_0^{\xi}A^\eta(\zeta)\,\mathrm{d}\zeta,
\]
hence \cite{hof2}, Corollary~2.2, applies.
\end{pf}

\begin{rem}
Due to the fourth-order term $-\eta\Delta^2 u$ there are no a priori
estimates of the $L^p(\mathbb{T}^N)$-norm for solutions of the approximations
\eqref{eqapprox} and that is the reason why we cannot deal directly
with \eqref{eq} if the coefficients have polynomial growth. To
overcome this difficulty, we proceed in two steps and avoid the
additional assumption upon $B$ in the next section. Note that the
linear growth hypothesis is satisfied for the remaining coefficients,
that is, for $\bar{A}(\xi)=\int_0^\xi A(\zeta)\,\mathrm{d}\zeta$ since
$A\in C_b(\mathbb{R})$ and for $\Phi$ due to \eqref{linrust}.
\end{rem}

\begin{prop}\label{propenergy1}
For any $p\in[2,\infty)$, the solution to \eqref{eqapprox}
satisfies the following energy estimate:
%
\begin{eqnarray}
 && \mathbb{E}\sup_{0\leq t\leq T}\bigl\|u^\eta(t)
\bigr\|_{L^2(\mathbb
{T}^N)}^p+p\tau \mathbb{E}\int_0^T
\bigl\|u^{\eta}\bigr\|_{L^2(\mathbb{T}^N)}^{p-2}\bigl\|\nabla u^{\eta}\bigr\|
^2_{L^2(\mathbb{T}^N)}\,\mathrm{d}s
\nonumber
\\[-9pt]
\label{eqenergy}
\\[-9pt]
\nonumber
&&\qquad\leq C \bigl(1+\mathbb{E}\|u_0\|_{L^2(\mathbb{T}^N)}^p
\bigr),
\end{eqnarray}
%
%
where the constant $C$ does not depend on $\eta, \tau$ and $L$.
\end{prop}

\begin{pf}
Let us apply the It\^o formula to the function $f(v)=\|v\|_{L^2(\mathbb{T}
^N)}^p$. We obtain
%
\begin{eqnarray*}
\bigl\|u^\eta(t)\bigr\|_{L^2(\mathbb{T}^N)}^p&=&
\|u_0\|_{L^2(\mathbb
{T}^N)}^p-p\int_0^t
\bigl\| u^\eta\bigr\|_{L^2(\mathbb{T}^N)}^{p-2} \bigl\langle
u^\eta,\operatorname {div} \bigl(B^\eta \bigl(u^\eta
\bigr) \bigr) \bigr\rangle\,\mathrm{d}s
\\
&&{}+p\int_0^t\bigl\|u^\eta
\bigr\|_{L^2(\mathbb{T}^N)}^{p-2} \bigl\langle u^\eta ,
\operatorname{div} \bigl(A^\eta\bigl(u^\eta\bigr)\nabla
u^\eta \bigr) \bigr\rangle\,\mathrm{d} s
\\
&&{}-p\eta\int_0^t\bigl\|u^\eta
\bigr\|_{L^2(\mathbb{T}^N)}^{p-2} \bigl\langle u^\eta,
\Delta^2 u^\eta \bigr\rangle\,\mathrm{d}s
\\
&&{}+p\sum_{k\geq1}\int_0^t
\bigl\|u^\eta\bigr\|_{L^2(\mathbb
{T}^N)}^{p-2} \bigl\langle u^\eta,
g_k^\eta\bigl(u^\eta\bigr) \bigr\rangle\,
\mathrm{d}\beta _k(s)
\\
&&{}+\frac{p}{2}\int_0^t
\bigl\|u^\eta\bigr\|_{L^2(\mathbb{T}^N)}^{p-2}\bigl\| G_\eta
\bigl(u^\eta\bigr)\bigr\|_{L^2(\mathbb{T}^N)}^2\,\mathrm{d}s
\\
&&{}+\frac{p(p-2)}{2}\sum_{k\geq1}\int
_0^t\bigl\|u^\eta\bigr\| _{L^2(\mathbb{T}
^N)}^{p-4}
\bigl\langle u^\eta, g_k^\eta
\bigl(u^\eta\bigr) \bigr\rangle ^2\,\mathrm{d} s
\\
&=&\mathrm{J}_1+\cdots+\mathrm{J}_7.
\end{eqnarray*}
%
%
Setting $H(\xi)=\int_0^\xi B^\eta(\zeta)\,\mathrm{d}\zeta$, we conclude
that the second term on the right-hand side vanishes, the third one as
well as the fourth one is nonpositive
%
\begin{eqnarray*}
\mathrm{J}_3+\mathrm{J}_4
&\leq & -p\tau\int
_0^t\bigl\|u^\eta\bigr\| _{L^2(\mathbb{T}
^N)}^{p-2}
\bigl\|\nabla u^\eta\bigr\|_{L^2(\mathbb{T}^N)}^2\,\mathrm{d}s\\
&&{}-p\eta \int
_0^t\bigl\| u^\eta\bigr\|_{L^2(\mathbb{T}^N)}^{p-2}
\bigl\|\Delta u^\eta\bigr\|_{L^2(\mathbb
{T}^N)}^2\,\mathrm{d}s,
\end{eqnarray*}
%
%
the sixth and seventh term are estimated as follows:
\[
\mathrm{J}_6+\mathrm{J}_7\leq C \biggl(1+\int
_0^t\bigl\|u^\eta\bigr\|_{L^2(\mathbb{T}^N)}^p
\,\mathrm{d}s \biggr),
%
\]
and since expectation of $\mathrm{J}_5$ is zero, we get
%
\begin{eqnarray*}
&& \mathbb{E}\bigl\|u^\eta(t)\bigr\|_{L^2(\mathbb{T}^N)}^p+p\tau \mathbb
{E}\int_0^t\bigl\| u^{\eta}
\bigr\|_{L^2(\mathbb{T}^N)}^{p-2}\bigl\|\nabla u^\eta\bigr\|^2_{L^2(\mathbb
{T}^N)}
\,\mathrm{d} s
\\
&&\qquad\leq\mathbb{E}\|u_0
\|_{L^2(\mathbb{T}^N)}^p +C \biggl(1+\int_0^t
\mathbb{E}\bigl\|u^\eta(s)\bigr\|_{L^2(\mathbb{T}^N)}^p\,\mathrm{d}s
\biggr).
\end{eqnarray*}
%
%
Application of the Gronwall lemma now yields
%
\begin{eqnarray*}
&& \mathbb{E}\bigl\|u^\eta(t)\bigr\|_{L^2(\mathbb{T}^N)}^p+p\tau \mathbb
{E}\int_0^t\bigl\|u^{\eta}
\bigr\|_{L^2(\mathbb{T}^N)}^{p-2}\bigl\|\nabla u^\eta\bigr\|^2_{L^2(\mathbb
{T}^N)}
\,\mathrm{d}s\\
&&\qquad \leq C \bigl(1+\mathbb{E}\|u_0\|_{L^2(\mathbb{T}^N)}^p
\bigr).
\end{eqnarray*}
%

In order to obtain an estimate of $\mathbb{E}\sup_{0\leq t\leq T}\|
u^\eta
(t)\|_{L^2(\mathbb{T}^N)}^p$, we proceed similarly as above to get
%
\begin{eqnarray*}
\mathbb{E} \sup_{0\leq t\leq T}\bigl\|u^\eta(t)
\bigr\|_{L^2(\mathbb
{T}^N)}^p &\leq & \mathbb{E} \|u_0
\|_{L^2(\mathbb{T}^N)}^p+C \biggl(1+\int_0^T
\mathbb{E}\bigl\|u^\eta\bigr\| _{L^2(\mathbb{T}
^N)}^p\,\mathrm{d}s \biggr)
\\
&&{}+p \mathbb{E}\sup_{0\leq t\leq T} \biggl|\sum_{k\geq
1}
\int_0^t\bigl\|u^\eta\bigr\|_{L^2(\mathbb{T}^N)}^{p-2}
\bigl\langle u^\eta, g_k^\eta
\bigl(u^\eta\bigr) \bigr\rangle\,\mathrm{d}\beta_k(s) \biggr|
\end{eqnarray*}
%
%
and for the stochastic integral we employ the Burkholder--Davis--Gundy and the Schwartz inequality, the assumption \eqref
{linrust} and the weighted Young inequality
%
\begin{eqnarray*}
&& \mathbb{E}\sup_{0\leq t\leq T} \biggl|\sum_{k\geq1}
\int_0^t\bigl\| u^\eta\bigr\|
_{L^2(\mathbb{T}^N)}^{p-2} \bigl\langle u^\eta,
g_k^\eta\bigl(u^\eta\bigr) \bigr\rangle\,
\mathrm{d}\beta_k(s) \biggr|
\\
&&\qquad \leq C \mathbb{E} \biggl(\int_0^T
\bigl\|u^\eta\bigr\|_{L^2(\mathbb
{T}^N)}^{2p-4}\sum
_{k\geq1} \bigl\langle u^\eta, g_k^\eta
\bigl(u^\eta\bigr) \bigr\rangle ^2\,\mathrm{d} s
\biggr)^{{1}/{2}}
\\
&&\qquad \leq C \mathbb{E} \biggl(\int_0^T\bigl\|
u^\eta\bigr\|_{L^2(\mathbb
{T}^N)}^{2p-2}\sum
_{k\geq1}\bigl\| g_k^\eta\bigl(u^\eta
\bigr)\bigr\|_{L^2(\mathbb{T}^N)}^2\,\mathrm {d}s \biggr)^{{1}/{2}}
\\
&&\qquad\leq C \mathbb{E} \Bigl(\sup_{0\leq t\leq T}\bigl\|u^\eta(t)\bigr\|
^p_{L^2(\mathbb{T}
^N)} \Bigr)^{{1}/{2}} \biggl(1+\int
_0^T\bigl\|u^\eta\bigr\|_{L^2(\mathbb{T}
^N)}^p
\,\mathrm{d}s \biggr)^{{1}/{2}}
\\
&&\qquad \leq\frac{1}{2} \mathbb{E}\sup_{0\leq t\leq T}
\bigl\|u^\eta(t)\bigr\| ^p_{L^2(\mathbb{T}^N)}+C \biggl(1+\int
_0^T\mathbb{E}\bigl\|u^\eta\bigr\|
_{L^2(\mathbb{T}^N)}^p\, \mathrm{d}s \biggr).
\end{eqnarray*}
%
%
This gives \eqref{eqenergy}.
\end{pf}
%


\begin{prop}\label{propholder}
For all $\lambda\in(0,1/2)$, there exists a constant $C>0$ such that
for all $\eta\in(0,1)$
\[
\mathbb{E}\bigl\|u^\eta\bigr\|_{C^\lambda([0,T];H^{-3}(\mathbb{T}^N))}\leq C.
\]
\end{prop}

\begin{pf}
Recall that due to Proposition~\ref{propenergy1}, the set $\{u^\eta
; \eta\in(0,1)\}$ is bounded in $L^2(\Omega;L^2(0,T;H^1(\mathbb{T}^N)))$.
Since the coefficients $B^\eta, \bar{A}^\eta$ have linear growth
uniformly in $\eta$ we conclude, in particular, that
\[
\bigl\{\operatorname{div}\bigl(B^\eta\bigl(u^\eta\bigr)
\bigr)\bigr\},\qquad \bigl\{\operatorname {div}\bigl(A^\eta
\bigl(u^\eta\bigr)\nabla u^\eta\bigr)\bigr\}, \qquad\bigl\{\eta
\Delta^2 u^\eta\bigr\}
\]
are bounded in $L^2(\Omega;L^2(0,T;H^{-3}(\mathbb{T}^N)))$, and consequently
\[
\mathbb{E} \biggl\|u^\eta-\int_0^\cdot
\Phi^\eta\bigl(u^\eta\bigr)\, \mathrm{d}W
\biggr\|_{C^{1/2}([0,T];H^{-3}(\mathbb{T}^N))}\leq C.
\]

Moreover, for all $\lambda\in(0,1/2)$, paths of the above stochastic
integral are $\lambda$-H\"{o}lder continuous $L^2(\mathbb{T}^N)$-valued
functions and
\[
\mathbb{E} \biggl\|\int_0^\cdot\Phi^\eta
\bigl(u^\eta\bigr)\,\mathrm {d}W \biggr\| _{C^\lambda([0,T];L^2(\mathbb{T}^N))}\leq C.
\]
Indeed, it is a consequence of the Kolmogorov continuity theorem (see
\cite{daprato}, Theorem~3.3) since the following uniform estimate
holds true. Let $a>2, s,t\in[0,T]$, then
%
\begin{eqnarray*}
\mathbb{E} \biggl\|\int_s^t\Phi^\eta
\bigl(u^\eta\bigr)\,\mathrm{d}W \biggr\|^a&\leq &  C \mathbb{E}
\biggl(\int_s^t \bigl\|\Phi^\eta
\bigl(u^\eta\bigr)\bigr\| _{L_2(\mathfrak
{U};L^2(\mathbb{T}^N))}^2\,\mathrm{d}r
\biggr)^{{a}/{2}}
\\
&\leq &  C |t-s|^{{a}/{2}-1}\mathbb{E}\int_s^t
\biggl(\sum_{k\geq
1}\bigl\| g_k^\eta
\bigl(u^\eta\bigr)\bigr\|_{L^2(\mathbb{T}^N)}^2 \biggr)^{{a}/{2}}\,\mathrm {d}r
\\
&\leq &  C |t-s|^{{a}/{2}} \Bigl(1+\mathbb{E}\sup_{0\leq t\leq T}\bigl\|
u^\eta(t)\bigr\|^a_{L^2(\mathbb{T}^N)} \Bigr)
\\
&\leq &  C |t-s|^{{a}/{2}} \bigl(1+\mathbb{E}\|u_0
\|_{L^2(\mathbb
{T}^N)}^a \bigr),
\end{eqnarray*}
%
%
where we made use of the Burkholder--Davis--Gundy inequality, \eqref
{linrust} and Proposition~\ref{propenergy1}.
\end{pf}
%

\subsection{Compactness argument}
\label{subseccompact}

Let us define the path space $\mathcal{X}=\mathcal{X}_u\times
\mathcal{X}_W$, where
\[
\mathcal{X}_u=L^2 \bigl(0,T;L^2\bigl(
\mathbb{T}^N\bigr) \bigr)\cap C \bigl([0,T];H^{-4}\bigl(
\mathbb{T}^N\bigr) \bigr),\qquad  \mathcal{X}_W=C \bigl([0,T];
\mathfrak{U}_0 \bigr).
\]
Let us denote by $\mu_{u^\eta}$ the law of $u^\eta$ on $\mathcal
{X}_u$, $\eta\in(0,1)$, and by $\mu_W$ the law of $W$ on~$\mathcal
{X}_W$. Their joint law on $\mathcal{X}$ is then denoted by $\mu^\eta$.


\begin{prop}\label{tight}
The set $\{\mu^\eta; \eta\in(0,1)\}$ is tight and, therefore,
relatively weakly compact in $\mathcal{X}$.
\end{prop}

\begin{pf}
First, we prove tightness of $\{\mu_{u^\eta}; \eta\in(0,1)\}$
which follows directly from Propositions \ref{propenergy1} and \ref
{propholder} by making use of the embeddings
%
\begin{eqnarray*}
C^\lambda\bigl([0,T];H^{-3}\bigl(\mathbb{T}^N\bigr)
\bigr)& \hookrightarrow &  H^\alpha \bigl(0,T;H^{-3}\bigl(
\mathbb{T}^N\bigr)\bigr),\qquad \alpha<\lambda,
\\
C^\lambda\bigl([0,T];H^{-3}\bigl(\mathbb{T}^N\bigr)
\bigr)& \stackrel{c} {\hookrightarrow} &  C\bigl([0,T];H^{-4}\bigl(
\mathbb{T}^N\bigr)\bigr),
\\
L^2\bigl(0,T;H^1\bigl(\mathbb{T}^N\bigr)
\bigr)\cap H^{\alpha}\bigl(0,T;H^{-3}\bigl(\mathbb
{T}^N\bigr)\bigr)& \stackrel {c} {\hookrightarrow} &  L^2
\bigl(0,T;L^2\bigl(\mathbb{T}^N\bigr)\bigr).
\end{eqnarray*}
%
%
Indeed, for $R>0$ we define the set
\begin{eqnarray*}
B_{R} &=&  \bigl\{u\in L^2\bigl(0,T;H^{1}\bigl(
\mathbb{T}^N\bigr)\bigr)\cap C^{\lambda
}\bigl([0,T];H^{-3}
\bigl(\mathbb{T}^N\bigr)\bigr);
\\
&& \hspace*{1pt}\quad \|u\|_{L^2(0,T;H^{1}(\mathbb{T}^N))}+\|u\|_{C^{\lambda
}([0,T];H^{-3}(\mathbb{T}
^N))}\leq R \bigr\}
\end{eqnarray*}
which is thus relatively compact in $\mathcal{X}_u$. Moreover, by
Propositions \ref{propenergy1} and \ref{propholder}
%
\begin{eqnarray*}
\mu_{u^\eta} \bigl(B_{R}^C \bigr)&\leq & \mathbb{P}
\biggl(\bigl\|u^\eta\bigr\| _{L^2(0,T;H^{1}(\mathbb{T}^N))}>\frac{R}{2} \biggr)+\mathbb{P}
\biggl(\bigl\|u^\eta\bigr\| _{C^{\lambda}([0,T];H^{-3}(\mathbb{T}^N))}>\frac{R}{2} \biggr)
\\
&\leq & \frac{2}{R} \bigl(\mathbb{E}\bigl\|u^\eta\bigr\|_{L^2(0,T;H^{1}(\mathbb{T}
^N))}+
\mathbb{E}\bigl\|u^\eta\bigr\|_{C^{\lambda}([0,T];H^{-3}(\mathbb
{T}^N))} \bigr)\leq \frac{C}{R}
\end{eqnarray*}
%
%
hence given $\vartheta>0$ there exists $R>0$ such that
\[
\mu_{u^\eta}(B_{R})\geq1-\vartheta.
\]
Besides, since the law $\mu_W$ is tight as being a Radon measure on
the Polish space~$\mathcal{X}_W$, we conclude that also the set of
their joint laws $\{\mu^\eta; \eta\in(0,1)\}$ is tight and
Prokhorov's theorem therefore implies that it is relatively weakly compact.
\end{pf}

Passing to a weakly convergent subsequence $\mu^n=\mu^{\eta_n}$ (and
denoting by $\mu$ the limit law), we now apply the Skorokhod embedding
theorem to infer the following result.

\begin{prop}
There exists a probability space $(\tilde{\Omega},\tilde{\mathscr{F}
},\tilde{\mathbb{P}})$ with a sequence of $\mathcal{X}$-valued random
variables $(\tilde{u}^n,\tilde{W}^n), n\in\mathbb{N}$, and
$(\tilde
{u},\tilde{W})$ such that:
\begin{longlist}[(ii)]
\item[(i)] the laws of $(\tilde{u}^n,\tilde{W}^n)$ and $(\tilde{u},\tilde
{W})$ under $ \tilde{\mathbb{P}}$ coincide with $\mu^n$ and $\mu$,
respectively,
\item[(ii)] $(\tilde{u}^n,\tilde{W}^n)$ converges $ \tilde{\mathbb{P}
}$-almost surely to $(\tilde{u},\tilde{W})$ in the topology of
$\mathcal{X}$.
\end{longlist}
\end{prop}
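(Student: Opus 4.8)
The plan is to obtain the claim as a direct application of the Skorokhod representation theorem, so that the only real work lies in checking its hypotheses. First I would verify that the path space $\mathcal{X}=\mathcal{X}_u\times\mathcal{X}_W$ is a Polish space. The factor $\mathcal{X}_u=L^2(0,T;L^2(\mt^N))\cap C([0,T];H^{-4}(\mt^N))$ is an intersection of two separable Banach spaces and is therefore itself a separable Banach space under the sum of the two norms; likewise $\mathcal{X}_W=C([0,T];\mathfrak{U}_0)$ is separable, since $\mathfrak{U}_0$ is a separable Hilbert space. As a finite product of separable complete metric spaces is again separable and complete, $\mathcal{X}$ is Polish, which is precisely the setting required by the classical version of the theorem.

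Next I would record the weak convergence that feeds into the theorem. By Proposition \ref{tight} the family $\{\mu^\eta;\,\eta\in(0,1)\}$ is tight on $\mathcal{X}$, so Prokhorov's theorem yields a subsequence $\mu^n=\mu^{\eta_n}$ together with a probability measure $\mu$ on $\mathcal{X}$ such that $\mu^n\rightharpoonup\mu$ weakly. This is the selection already announced in the text preceding the statement.

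With $\mathcal{X}$ Polish and $\mu^n\rightharpoonup\mu$, I would then invoke the Skorokhod representation theorem in its sequential form: there exist a single probability space $(\tilde\Omega,\tilde{\mf},\tilde{\prst})$ and $\mathcal{X}$-valued random variables $(\tilde{u}^n,\tilde{W}^n)$, $n\in\mn$, together with $(\tilde{u},\tilde{W})$, such that the law of $(\tilde{u}^n,\tilde{W}^n)$ under $\tilde{\prst}$ equals $\mu^n$, the law of $(\tilde{u},\tilde{W})$ equals $\mu$, and $(\tilde{u}^n,\tilde{W}^n)\to(\tilde{u},\tilde{W})$ holds $\tilde{\prst}$-almost surely in the topology of $\mathcal{X}$. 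This delivers both assertions (i) and (ii) simultaneously.

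I do not expect a genuine obstacle here, since all the analytic difficulty has already been absorbed into the tightness estimate of Proposition \ref{tight}. The one point that must be handled with care is the verification that $\mathcal{X}$ is separable and metrizable, so that the classical Skorokhod theorem, rather than its Jakubowski-type generalization for non-metric spaces, applies, and that the component laws $\mu_{u^n}$ and $\mu_W$ are genuinely Radon measures on these separable spaces. Once this is in place, the statement follows with no further computation.
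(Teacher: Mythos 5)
Your proposal is correct and follows exactly the route the paper takes: the paper itself obtains this proposition by passing to a weakly convergent subsequence via Prokhorov's theorem (Proposition \ref{tight}) and then invoking the classical Skorokhod representation theorem on the Polish space $\mathcal{X}$. Your additional verification that $\mathcal{X}_u$ and $\mathcal{X}_W$ are separable and completely metrizable is precisely the standard detail implicitly assumed in the paper's one-line argument.
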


Finally, let $(\tilde{\mathscr{F}}_t)$ be the $\tilde{\mathbb{P}}$-augmented
canonical filtration of the process $(\tilde{u},\tilde{W})$, that is
\[
\tilde{\mathscr{F}}_t=\sigma \bigl(\sigma (\varrho_t
\tilde {u},\varrho _t\tilde{W} )\cup \bigl\{N\in\tilde{\mathscr{F}};
\tilde {\mathbb{P} }(N)=0 \bigr\} \bigr),\qquad t\in[0,T],
\]
where $\varrho_t$ is the operator of restriction to the interval
$[0,t]$, that is, if $E$ is a Banach space and $t\in[0,T]$, we define
%
\begin{eqnarray*}
\varrho_t\dvtx  C\bigl([0,T];E\bigr)&\longrightarrow &  C
\bigl([0,t];E\bigr),
\\
k&\longmapsto &  k|_{[0,t]}.
\end{eqnarray*}
%
%
Clearly, $\varrho_t$ is a continuous mapping.

\subsection{Identification of the limit}
\label{subsecidentif}

The aim of this subsection is to prove the following.

\begin{prop}\label{propmartsol}
$ ((\tilde{\Omega},\tilde{\mathscr{F}},(\tilde{\mathscr
{F}}_t),\tilde{\mathbb{P}
}),\tilde{W},\tilde{u} )$
is a weak martingale solution to~\eqref{eq} provided $\mathrm{(H1),
(H2)}$ and $\mathrm{(H3)}$ are fulfilled.
\end{prop}

The proof is based on a new general method of constructing martingale
solutions of SPDEs that does not rely on any kind of martingale
representation theorem and, therefore, holds independent interest
especially in situations where these representation theorems are no
longer available. For other applications of this method, we refer the
reader to \cite{on1,hof,hofse,on2}.

Let us define for all $t\in[0,T]$ and a test function $\varphi\in
C^\infty(\mathbb{T}^N)$
%
\begin{eqnarray*}
M^n(t)&=&  \bigl\langle u^n(t),\varphi \bigr\rangle-
\langle u_0,\varphi \rangle+\int_0^t
\bigl\langle\operatorname{div} \bigl(B^n\bigl(u^n\bigr)
\bigr),\varphi \bigr\rangle\,\mathrm{d}s
\\
&&{}-\int_0^t \bigl\langle
\operatorname{div} \bigl(A^n\bigl(u^n\bigr)\nabla
u^n \bigr),\varphi \bigr\rangle\,\mathrm{d}s+\eta_n\int
_0^t \bigl\langle \Delta ^2u^n,
\varphi \bigr\rangle\,\mathrm{d}s,\qquad n\in\mathbb{N},
\\
\tilde{M}^n(t)&=&  \bigl\langle\tilde{u}^n(t),\varphi
\bigr\rangle - \langle u_0,\varphi \rangle+\int_0^t
\bigl\langle \operatorname{div} \bigl(B^n\bigl(\tilde{u}^n
\bigr) \bigr),\varphi \bigr\rangle\,\mathrm{d}s
\\
&&{}-\int_0^t \bigl\langle
\operatorname{div} \bigl(A^n\bigl(\tilde {u}^n\bigr)
\nabla \tilde{u}^n \bigr),\varphi \bigr\rangle\,\mathrm{d}s+
\eta_n\int_0^t \bigl\langle
\Delta^2\tilde{u}^n,\varphi \bigr\rangle\,\mathrm{d}s,
\qquad n\in \mathbb{N},
\\
\tilde{M}(t)&=& \bigl\langle\tilde{u}(t),\varphi \bigr\rangle- \langle
u_0,\varphi \rangle+\int_0^t
\bigl\langle\operatorname {div} \bigl(B(\tilde{u}) \bigr),\varphi \bigr\rangle\,
\mathrm{d}s-\int_0^t \bigl\langle
\operatorname{div} \bigl(A(\tilde{u})\nabla\tilde{u} \bigr),\varphi \bigr\rangle\,
\mathrm{d}s.
\end{eqnarray*}
%
%
Hereafter, times $s,t\in[0,T], s\leq t$, and a continuous function
\[
\gamma\dvtx  C \bigl([0,s];H^{-4}\bigl(\mathbb{T}^N\bigr) \bigr)
\times C \bigl([0,s];\mathfrak {U}_0 \bigr)\longrightarrow[0,1]
\]
will be fixed but otherwise arbitrary. The proof is an immediate
consequence of the following two lemmas.

\begin{lemma}
The process $\tilde{W}$ is a $(\tilde{\mathscr{F}}_t)$-cylindrical Wiener
process, that is, there exists a collection of mutually independent
real-valued $(\tilde{\mathscr{F}}_t)$-Wiener processes $\{\tilde
{\beta}_k\}
_{k\geq1}$ such that $\tilde{W}=\sum_{k\geq1}\tilde{\beta}_k e_k$.
\end{lemma}

\begin{pf}
Obviously, $\tilde{W}$ is a $\mathfrak{U_0}$-valued cylindrical
Wiener process and is $(\tilde{\mathscr{F}}_t)$-adapted. According to
the L\'
evy martingale characterization theorem, it remains to show that it is
also a $(\tilde{\mathscr{F}}_t)$-martingale.
It holds true
\[
\tilde{\mathbb{E}} \gamma \bigl(\varrho_s \tilde{u}^n,
\varrho _s\tilde {W}^n \bigr) \bigl[
\tilde{W}^n(t)-\tilde{W}^n(s) \bigr]=\mathbb{E} \gamma
\bigl(\varrho_s u^n,\varrho_s W \bigr)
\bigl[W(t)-W(s) \bigr]=0
\]
since $W$ is a martingale and the laws of $(\tilde{u}^n,\tilde{W}^n)$
and $(u^n,W)$ coincide.
Next, the uniform estimate
\[
\sup_{n\in\mathbb{N}}\tilde{\mathbb{E}}\bigl\|\tilde{W}^n(t)\bigr\|
_{\mathfrak
{U}_0}^2=\sup_{n\in\mathbb{N}}\mathbb{E}\bigl\|W(t)
\bigr\|^2_{\mathfrak
{U}_0}<\infty
\]
and the Vitali convergence theorem yields
\[
\tilde{\mathbb{E}} \gamma (\varrho_s\tilde{u},\varrho
_s\tilde {W} ) \bigl[\tilde{W}(t)-\tilde{W}(s) \bigr]=0
\]
which completes the proof.
\end{pf}

\begin{lemma}
The processes
\[
\tilde{M},\qquad \tilde{M}^2-\sum_{k\geq1}\int
_0^\cdot \bigl\langle g_k(\tilde{u}),
\varphi \bigr\rangle^2\,\mathrm{d}r,\qquad \tilde {M}\tilde {
\beta}_k-\int_0^\cdot \bigl\langle
g_k(\tilde{u}),\varphi \bigr\rangle\,\mathrm{d}r
\]
are $(\tilde{\mathscr{F}}_t)$-martingales.
\end{lemma}

\begin{pf}
Here, we use the same approach as in the previous lemma. Let us denote
by $\tilde{\beta}^n_k, k\geq1$ the real-valued Wiener processes
corresponding to $\tilde{W}^n$, that is $\tilde{W}^n=\sum_{k\geq
1}\tilde{\beta}_k^n e_k$. For all $n\in\mathbb{N}$, the process
\[
M^n=\int_0^\cdot \bigl\langle
\Phi^n\bigl(u^n\bigr)\,\mathrm{d}W(r),\varphi \bigr
\rangle=\sum_{k\geq1}\int_0^\cdot
\bigl\langle g_k^n\bigl(u^n\bigr),\varphi
\bigr\rangle\,\mathrm{d}\beta_k(r)
\]
is a square integrable $(\mathscr{F}_t)$-martingale by \eqref
{linrust} and
\eqref{eqenergy} and, therefore,
\[
\bigl(M^n\bigr)^2-\sum_{k\geq1}
\int_0^\cdot \bigl\langle g_k^n
\bigl(u^n\bigr),\varphi \bigr\rangle^2\,\mathrm{d}r,
\qquad M^n\beta_k-\int_0^\cdot
\bigl\langle g_k^n\bigl(u^n\bigr),\varphi
\bigr\rangle\,\mathrm{d}r
\]
are $(\mathscr{F}_t)$-martingales. Besides, it follows from the
equality of
laws that
%
\begin{eqnarray}
&&\quad\tilde{\mathbb{E}} \gamma \bigl(\varrho_s
\tilde{u}^n,\varrho _s\tilde{W}^n \bigr)
\bigl[\tilde{M}^n(t)-\tilde{M}^n(s) \bigr]
\nonumber
\\[-8pt]
\label{exp1}
\\[-8pt]
\nonumber
&&\quad\qquad =\mathbb{E} \gamma \bigl(\varrho_s u^n,
\varrho_s W \bigr) \bigl[M^n(t)-M^n(s)
\bigr]=0,
\\
&&\quad\tilde{\mathbb{E}} \gamma \bigl(\varrho_s
\tilde{u}^n,\varrho _s\tilde{W}^n \bigr)
\biggl[\bigl(\tilde{M}^n\bigr)^2(t)-\bigl(
\tilde{M}^n\bigr)^2(s)-\sum_{k\geq1}
\int_s^t \bigl\langle g_k^n
\bigl(\tilde{u}^n\bigr),\varphi \bigr\rangle^2\,
\mathrm{d}r \biggr]
\nonumber
\\
\label{exp2}
&&\quad\qquad=\mathbb{E} \gamma \bigl(\varrho_s u^n,
\varrho_s W \bigr) \biggl[\bigl(M^n\bigr)^2(t)-
\bigl(M^n\bigr)^2(s)-\sum_{k\geq1}
\int_s^t \bigl\langle g_k^n
\bigl(u^n\bigr),\varphi \bigr\rangle^2\,\mathrm{d}r
\biggr]\\
\nonumber
&&\quad\qquad=0,
\\
&&\quad\tilde{\mathbb{E}} \gamma \bigl(\varrho_s
\tilde{u}^n,\varrho _s\tilde{W}^n \bigr)
\biggl[\tilde{M}^n(t)\tilde{\beta}_k^n(t)-
\tilde {M}^n(s)\tilde{\beta}_k^n(s)-\int
_s^t \bigl\langle g_k^n
\bigl(\tilde {u}^n\bigr),\varphi \bigr\rangle\,\mathrm{d}r \biggr]
\nonumber
\\
\label{exp3}
&&\quad\qquad=\mathbb{E} \gamma \bigl(\varrho_s u^n,
\varrho_s W \bigr) \biggl[M^n(t)\beta_k(t)-M^n(s)
\beta_k(s)-\int_s^t \bigl\langle
g_k^n\bigl(u^n\bigr),\varphi \bigr\rangle
\,\mathrm{d}r \biggr]\\
\nonumber
&&\quad\qquad=0.
\end{eqnarray}
%
%
Moreover,\vspace*{1pt} since the coefficients $B, \bar{A}, \sum_{k\geq1}g_k$
have linear growth, we can pass to the limit in \eqref{exp1}--\eqref
{exp3} due to \eqref{eqenergy} and the Vitali convergence theorem. We obtain
%
\begin{eqnarray*}
\tilde{\mathbb{E}} \gamma (\varrho_s\tilde{u},\varrho
_s\tilde {W} ) \bigl[\tilde{M}(t)-\tilde{M}(s) \bigr] &=& 0,
\\
\tilde{\mathbb{E}} \gamma (\varrho_s\tilde{u},\varrho
_s\tilde {W} ) \biggl[\tilde{M}^2(t)-
\tilde{M}^2(s)-\sum_{k\geq1}\int
_s^t \bigl\langle g_k(\tilde{u}),
\varphi \bigr\rangle^2\,\mathrm {d}r \biggr] &=& 0,
\\
\tilde{\mathbb{E}} \gamma (\varrho_s\tilde{u},\varrho
_s\tilde {W} ) \biggl[\tilde{M}(t)\tilde{\beta}_k(t)-
\tilde{M}(s)\tilde {\beta}_k(s)-\int_s^t
\bigl\langle g_k(\tilde{u}),\varphi \bigr\rangle\,\mathrm{d}r
\biggr] &=& 0,
\end{eqnarray*}
%
%
which gives the $(\tilde{\mathscr{F}}_t)$-martingale property.
\end{pf}

\begin{pf*}{Proof of Proposition \protect\ref{propmartsol}}
Once the above lemmas established, we infer that
\[
\biggl\langle \!\biggl\langle\tilde{M}-\int_0^\cdot
\bigl\langle \Phi(\tilde{u})\,\mathrm{d}\tilde{W},\varphi \bigr\rangle \biggr\rangle\! \biggr\rangle=0,
\]
where $\langle \!\langle \cdot \rangle \!\rangle$ denotes the
quadratic variation process. Accordingly,
%
\begin{eqnarray*}
\bigl\langle\tilde{u}(t),\varphi \bigr\rangle &=&  \langle
u_0,\varphi \rangle-\int_0^t
\bigl\langle\operatorname{div} \bigl(B(\tilde {u}) \bigr),\varphi \bigr\rangle\,
\mathrm{d}s+\int_0^t \bigl\langle
\operatorname{div} \bigl(A(\tilde{u})\nabla\tilde{u} \bigr),\varphi \bigr\rangle\,
\mathrm{d} s
\\
&&{} +\int_0^t \bigl\langle\Phi(\tilde{u})\,
\mathrm{d}\tilde {W},\varphi \bigr\rangle,\qquad t\in[0,T], \tilde{\mathbb {P}}\mbox{-a.s.},
\end{eqnarray*}
and the proof is complete.
\end{pf*}

\subsection{Pathwise solutions}
\label{subsecpathwise}

As a consequence of pathwise uniqueness established in Section~\ref{seccomparison} and existence of a martingale solution that follows
from the previous subsection, we conclude from the Gy\"{o}ngy--Krylov
characterization of convergence in probability that the original
sequence $u^n$ defined on the initial probability space $(\Omega
,\mathscr{F}
,\mathbb{P})$ converges in probability in the topology of $\mathcal{X}_u$
to a random variable $u$ which is a weak solution to \eqref{eq}
provided (H1), (H2) and (H3) are fulfilled. For further details on this
method, we refer the reader to \cite{hof}, Section~4.5.

Moreover, it follows from Proposition~\ref{propenergy1} that
\[
u\in L^2\bigl(\Omega;L^\infty\bigl(0,T;L^2\bigl(
\mathbb{T}^N\bigr)\bigr)\bigr)\cap L^2\bigl(\Omega
;L^2\bigl(0,T;H^1\bigl(\mathbb{T}^N\bigr)
\bigr)\bigr)
\]
and one can also establish continuity of its trajectories in
$L^2(\mathbb{T}
^N)$. Toward this end, we observe that the solution to
%
\begin{eqnarray*}
\mathrm{d}z &=& \Delta z\, \mathrm{d}t+\Phi(u)\,\mathrm{d}W,
\\
z(0)&=& u_0,
\end{eqnarray*}
%
%
belongs to $L^2(\Omega;C([0,T];L^2(\mathbb{T}^N)))$. Setting
$r=u-z$, we obtain
%
\begin{eqnarray*}
\partial_t r &=& \Delta r-\operatorname{div} \bigl(B(u) \bigr)+
\operatorname{div} \bigl(\bigl(A(u)-\mathrm{I}\bigr)\nabla u \bigr),
\\
r(0)&=& 0,
\end{eqnarray*}
%
%
hence it follows by semigroup arguments that $r\in C([0,T];
L^2(\mathbb{T}^N))$ a.s. and, therefore,
\[
u\in L^2\bigl(\Omega;C\bigl([0,T];L^2\bigl(
\mathbb{T}^N\bigr)\bigr)\bigr)\cap L^2\bigl(\Omega
;L^2\bigl(0,T;H^1\bigl(\mathbb{T}^N\bigr)
\bigr)\bigr).
\]

\section{Existence for nondegenerate case---polynomial growth of $B$}
\label{secnondeg1}

In this section, we relax the additional hypothesis upon $B$ and prove
existence of a weak solution to \eqref{eq}
under the remaining two additional hypotheses of Section~\ref{secnondeg}, that is, (H1) and (H2).

First, we approximate \eqref{eq} by
%
\begin{eqnarray}
\mathrm{d}u+\operatorname{div} \bigl(B^{R}(u) \bigr)\,
\mathrm {d}t&=& \operatorname{div} \bigl(A(u)\nabla u \bigr)\,\mathrm{d}t+\Phi(u)
\,\mathrm{d}W,
\nonumber
\\[-8pt]
\label{eqapprox2}
\\[-8pt]
\nonumber
u(0)&=& u_0,
\end{eqnarray}
%
%
where $B^R$ is a truncation of $B$. According to the previous section,
for all $R\in\mathbb{N}$ there exists a unique weak solution to
\eqref{eqapprox2} such that, for all $p\in[2,\infty)$,
\[
\mathbb{E}\sup_{0\leq t\leq T}\bigl\|u^R(t)
\bigr\|_{L^2(\mathbb{T}^N)}^p+2\tau \mathbb{E} \int_0^T
\bigl\|\nabla u^{R}\bigr\|^2_{L^2(\mathbb{T}^N)}\,\mathrm{d}s\leq C
\bigl(1+\mathbb{E} \|u_0\|_{L^2(\mathbb{T}^N)}^p \bigr),
%
\]
where the constant $C$ is independent of $R$ and $\tau$. Furthermore,
we can also obtain a uniform estimate of the $L^p(\mathbb{T}^N)$-norm
that is
necessary in order to deal with coefficients having polynomial growth.

\begin{prop}\label{propenergy2}
For all $p\in[2,\infty)$, the solution to \eqref{eqapprox2}
satisfies the following estimate:
%
\begin{equation}
\label{eqenergy222}
\mathbb{E}\sup_{0\leq t\leq T}\bigl\|u^R(t)
\bigr\|_{L^p(\mathbb{T}^N)}^p\leq C \bigl(1+\mathbb{E}\|u_0
\|_{L^p(\mathbb{T}^N)}^p \bigr),
\end{equation}
%
%
where the constant $C$ does not depend on $R$ and $\tau$.
\end{prop}


\begin{pf}
As the generalized It\^o formula \eqref{ito} cannot be applied
directly to $\varphi(\xi)=|\xi|^p$, $p\in[2,\infty)$, and $\psi
(x)=1$, we follow the approach of \cite{denis1} and introduce
functions $\varphi_n\in C^2(\mathbb{R})$ that approximate $\varphi$ and
have quadratic growth at infinity as required by Proposition~\ref{propito}. Namely, let
\[
\,\varphi_n(\xi)=
\cases{ \displaystyle |
\xi|^p,& \hspace*{9pt}$|\xi|\leq n$,$\!\!$\vspace*{3pt}
\cr
\displaystyle
n^{p-2} \biggl[\frac{p(p-1)}{2}\xi^2-p(p-2)n|\xi|+
\frac
{(p-1)(p-2)}{2}n^2 \biggr], & \hspace*{9pt}$|\xi|>n$.$\!\!$}
\]
It is now easy to see that
%
\begin{eqnarray}
 \bigl|\xi\varphi'_n(\xi)\bigr|&\leq &  p
\varphi_n(\xi),\nonumber
\\
\bigl|\varphi'_n(\xi)\bigr|&\leq &  p \bigl(1+
\varphi_n(\xi)\bigr),\nonumber
\\
\label{odhad}
\bigl|\varphi'_n(\xi)\bigr|&\leq & |\xi|\varphi''_n(
\xi),
\\
\nonumber
\xi^2\varphi''_n(\xi)&\leq &
p(p-1)\varphi_n(\xi),
\\
\nonumber
\varphi''(\xi)&\leq &  p(p-1) \bigl(1+
\varphi_n(\xi)\bigr)
\end{eqnarray}
%
%
hold true for all $\xi\in\mathbb{R}, n\in\mathbb{N}, p\in
[2,\infty)$.
Then by Proposition~\ref{propito}
%
\begin{eqnarray*}
\int_{\mathbb{T}^N}\varphi_n\bigl(u^R(t)
\bigr)\,\mathrm{d}x&=& \int_{\mathbb
{T}^N}\varphi _n(u_0)
\,\mathrm{d}x-\int_0^t\!\int_{\mathbb{T}^N}
\varphi '_n\bigl(u^R\bigr)
\operatorname{div} \bigl(B^R\bigl(u^R\bigr) \bigr)\,
\mathrm{d}x\,\mathrm{d}s
\\
&&{}+\int_0^t\!\int_{\mathbb{T}^N}
\varphi'_n\bigl(u^R\bigr)\operatorname
{div} \bigl(A\bigl(u^R\bigr)\nabla u^R \bigr)\,
\mathrm{d}x \,\mathrm{d}s
\\
&&{}+\sum_{k\geq1} \int
_0^t\!\int_{\mathbb{T}^N}\varphi
'_n\bigl(u^R\bigr)g_k
\bigl(u^R\bigr)\,\mathrm{d}x\,\mathrm{d}\beta_k(s)
\\
&&{}+\frac{1}{2}\int_0^t\!\int
_{\mathbb{T}^N}\varphi ''_n
\bigl(u^R\bigr)G^2\bigl(u^R\bigr)\,
\mathrm{d}x\,\mathrm{d}s.
\end{eqnarray*}
%
%
Setting $H(\xi)=\int_0^\xi\varphi''_n(\zeta)B^R(\zeta)\,\mathrm{d}
\zeta\,$ it can be seen that the second term on the right-hand side
vanishes due to the boundary conditions.
The third term is nonpositive as the matrix $A$ is positive definite
\[
\int_0^t\!\int_{\mathbb{T}^N}
\varphi'_n\bigl(u^R\bigr)
\operatorname{div} \bigl(A\bigl(u^R\bigr)\nabla u^R
\bigr)\,\mathrm{d}x \,\mathrm{d}s
=-\int
_0^t\!\int_{\mathbb{T}^N}
\varphi''_n\bigl(u^R\bigr)\bigl|
\sigma\bigl(u^R\bigr)\nabla u^R\bigr|^2\,
\mathrm{d}x\,\mathrm{d}s. 
%
\]
The last term is estimated by \eqref{odhad}
%
\begin{eqnarray*}
\frac{1}{2}\int_0^t\!\int
_{\mathbb{T}^N}\varphi''_n
\bigl(u^R\bigr)G^2\bigl(u^R\bigr)\,
\mathrm{d}x\, \mathrm{d}s&\leq & \frac{C}{2}\int_0^t
\!\int_{\mathbb{T}^N}\varphi ''_n
\bigl(u^R\bigr) \bigl(1+\bigl|u^R\bigr|^2\bigr)\,
\mathrm{d}x\,\mathrm{d}s
\\
&\leq & \frac{Cp(p-1)}{2}\int_0^t\!\int
_{\mathbb{T}^N}\bigl(1+\varphi _n\bigl(u^R
\bigr)\bigr)\, \mathrm{d}x\,\mathrm{d}s,
\end{eqnarray*}
%
%
and, therefore, by Gronwall's lemma we obtain
%
\begin{equation}
\label{energy111}
\mathbb{E}\int_{\mathbb{T}^N}\varphi_n
\bigl(u^R(t)\bigr)\,\mathrm{d}x\leq C \biggl(1+\mathbb{E} \int
_{\mathbb{T}^N}\varphi(u_0)\,\mathrm{d}x \biggr).
\end{equation}
%

As a consequence, a uniform estimate of $\mathbb{E}\sup_{0\leq t\leq
T}\|
u^R(t)\|_{L^p(\mathbb{T}^N)}^p$ follows. Indeed, we proceed similarly as
before only for the stochastic term we apply the Burkholder--Davis--Gundy and the Schwartz inequality, \eqref{odhad} and
the weighted Young inequality
%
\begin{eqnarray*}
&& \mathbb{E} \sup_{0\leq t\leq T} \biggl|\sum_{k\geq1}
\int_0^t\!\int_{\mathbb{T}
^N}
\varphi'_n\bigl(u^R\bigr) g_k
\bigl(u^R\bigr)\,\mathrm{d}x\,\mathrm{d}\beta_k(s) \biggr|
\\
&&\qquad \leq C \mathbb{E} \biggl(\int_0^T\sum
_{k\geq1} \biggl(\int_{\mathbb{T}
^N}\bigl|
\varphi'_n\bigl(u^R\bigr)\bigr|
\bigl|g_k\bigl(u^R\bigr)\bigr| \,\mathrm{d}x
\biggr)^2\,\mathrm {d}s \biggr)^{{1}/{2}}
\\
&&\qquad \leq C \mathbb{E} \biggl(\int_0^T \bigl\| \bigl|
\varphi'_n\bigl(u^R\bigr)\bigr|^{{1}/{2}}\bigl|u^R\bigr|^{{1}/{2}} \bigr\|_{L^2(\mathbb{T}^N)}^2
\\
&&\hspace*{26pt}\qquad\qquad{}\times\sum_{k\geq
1} \bigl\| \bigl|\varphi'_n
\bigl(u^R\bigr)\bigr|^{{1}/{2}}\bigl|u^R\bigr|^{-{1}/{2}}\bigl|g_k
\bigl(u^R\bigr)\bigr| \bigr\| _{L^2(\mathbb{T}^N)}^2\,\mathrm{d}s
\biggr)^{{1}/{2}}
\\
&&\qquad \leq C \mathbb{E} \biggl(\int_0^T\!\!\int
_{\mathbb{T}^N}\varphi _n\bigl(u^R\bigr)\,
\mathrm{d} x \biggl(1+\int_{\mathbb{T}^N}\varphi_n
\bigl(u^R\bigr)\,\mathrm{d}x \biggr)\, \mathrm{d}s
\biggr)^{{1}/{2}}
\\
&&\qquad\leq C \mathbb{E} \biggl(\sup_{0\leq t\leq T}\int_{\mathbb
{T}^N}
\varphi _n\bigl(u^R\bigr)\,\mathrm{d}x
\biggr)^{{1}/{2}} \biggl(1+\int_0^T\!\!\int
_{\mathbb{T}
^N}\varphi_n\bigl(u^R\bigr)\,
\mathrm{d}x\,\mathrm{d}s \biggr)^{{1}/{2}}
\\
&&\qquad \leq\frac{1}{2} \mathbb{E}\sup_{0\leq t\leq T}\int
_{\mathbb
{T}^N}\varphi _n\bigl(u^R\bigr)\,
\mathrm{d}x+C \biggl(1+\int_0^T\mathbb{E}\int
_{\mathbb
{T}^N}\varphi _n\bigl(u^R\bigr)\,
\mathrm{d}x\,\mathrm{d}s \biggr)
\end{eqnarray*}
%
%
which together with \eqref{energy111} and Fatou's lemma yields \eqref
{eqenergy222}.
\end{pf}

Having Proposition~\ref{propenergy2} in hand, the proof of
Propositions \ref{propholder} as well as all the proofs in
Sections~\ref{subseccompact}, \ref{subsecidentif}, \ref
{subsecpathwise} can be repeated with only minor modifications and,
consequently, the following result deduced.

\begin{thm}
Under the additional hypotheses \textup{(H1)}, \textup{(H2)},
there exists a
unique weak solution to \eqref{eq} such that, for all $p\in[2,\infty)$,
%
\begin{eqnarray}
&& \mathbb{E}\sup_{0\leq t\leq T}\bigl\|u(t)\bigr\|_{L^p(\mathbb
{T}^N)}^p+p(p-1)
\mathbb{E}\int_0^T\!\!\int_{\mathbb{T}^N}|u|^{p-2}\bigl|
\sigma(u)\nabla u\bigr|^2
\nonumber
\\[-8pt]
\label{eqenergyfinal}
\\[-8pt]
\nonumber
&&\qquad\leq C \bigl(1+\mathbb{E}\| u_0
\|_{L^p(\mathbb{T}^N)}^p \bigr)
\end{eqnarray}
and the constant $C$ is independent of $\tau$.
\end{thm}

\begin{pf*}{Sketch of the proof}
Following the approach of the previous section, we obtain:
\begin{longlist}[$\!\!$(iii)]
\item[$\!\!$(i)] For all $\lambda\in(0,1/2)$ there exists $C>0$ such that for
all $R\in\mathbb{N}$
\[
\mathbb{E}\bigl\|u^R\bigr\|_{C^\lambda([0,T];H^{-1}(\mathbb{T}^N))}\leq C.
\]
\item[$\!\!$(ii)] The laws of $\{u^R; R\in\mathbb{N}\}$ form a tight sequence on
\[
L^2\bigl(0,T;L^2\bigl(\mathbb{T}^N\bigr)
\bigr)\cap C\bigl([0,T];H^{-2}\bigl(\mathbb{T}^N\bigr)
\bigr).
\]
\item[$\!\!$(iii)] There exists $ ((\tilde{\Omega},\tilde{\mathscr
{F}},(\tilde{\mathscr{F}
}_t),\tilde{\mathbb{P}}),\tilde{W},\tilde{u} )$ that is a weak
martingale solution to~\eqref{eq}.
\item[$\!\!$(iv)] There exists $u\in L^2(\Omega;C([0,T];L^2(\mathbb{T}^N)))\cap
L^p(\Omega;L^\infty(0,T;L^p(\mathbb{T}^N)))\cap  L^2(\Omega
;L^2(0,T;H^1(\mathbb{T}^N)))$ that is a weak solution to \eqref{eq}.
\item[$\!\!$(v)] By the approach of Proposition~\ref{propenergy2}, we obtain
\eqref{eqenergyfinal}.\quad\qed
\end{longlist}
\noqed\end{pf*}
%

\section{Existence for degenerate case---smooth initial data}
\label{secdeg}

As the next step in the existence proof of Theorem~\ref{thmmain}, we
can finally proceed to the degenerate case. Throughout this section, we
only assume the additional hypothesis upon the initial condition, that
is, (H1).

Consider the following nondegenerate approximations of \eqref{eq}:
%
\begin{eqnarray}
\mathrm{d}u +\operatorname{div} \bigl(B(u) \bigr)\,\mathrm {d}t&=&
\operatorname{div} \bigl(A(u)\nabla u \bigr)\,\mathrm{d}t+\tau\Delta u\,
\mathrm{d}t+\Phi(u)\,\mathrm{d}W,
\nonumber
\\[-8pt]
\label{eqnondeg1}
\\[-8pt]
\nonumber
u(0)&=& u_0.
\end{eqnarray}
According\vspace*{1pt} to the results of Section~\ref{secnondeg1}, we have for any
fixed $\tau>0$ the existence of $u^\tau\in L^2(\Omega
;C([0,T];L^2(\mathbb{T}^N))) \cap L^2(\Omega;L^2(0,T;H^1(\mathbb
{T}^N)))$ which is
a weak solution to \eqref{eqnondeg1} and satisfies [cf. \eqref{eqenergyfinal}]
%
\begin{eqnarray}
&& \mathbb{E}\sup_{0\leq t\leq T}\bigl\|u^\tau(t)
\bigr\|_{L^p(\mathbb
{T}^N)}^p\nonumber\\
&&\label{energy}\quad{}+p(p-1) \mathbb{E}\int_0^T
\!\!\int_{\mathbb{T}^N}\bigl|u^\tau\bigr|^{p-2} \bigl(\bigl|\sigma
\bigl(u^\tau \bigr)\nabla u^\tau\bigr|^2+\tau\bigl|\nabla
u^\tau\bigr|^2 \bigr)\,\mathrm{d}x\,\mathrm {d}t
\\
\nonumber
&&\qquad\leq C \bigl(1+\mathbb{E}\|u_0\|_{L^p(\mathbb
{T}^N)}^p
\bigr)
\end{eqnarray}
with a constant that does not depend on $\tau$. As the next step, we
employ the technique of Section~\ref{subsecformulation} to derive
the kinetic formulation that is satisfied by $f^\tau=\mathbf
{1}_{u^\tau
>\xi}$ in the sense of $\mathcal{D}'(\mathbb{T}^N\times\mathbb
{R})$. It reads as follows:
%
\begin{eqnarray}
&&\mathrm{d}f^\tau+b\cdot\nabla f^\tau
\,\mathrm{d}t-A\dvtx  \mathrm{D}^2 f^\tau\,\mathrm{d} t-\tau\Delta
f^\tau\,\mathrm{d}t
\nonumber
\\[-8pt]
\label{eqkinapprox}
\\[-8pt]
\nonumber
&&\qquad =\delta_{u^\tau=\xi}\Phi\,\mathrm{d}W+\partial_\xi
\bigl(n^\tau _1+n^\tau_2-
\tfrac{1}{2}G^2\delta_{u^\tau=\xi} \bigr)\,\mathrm{d}t,
\end{eqnarray}
where
%
\begin{eqnarray*}
\mathrm{d}n^\tau_1(t,x,\xi)&=& \bigl|\sigma\nabla
u^\tau\bigr|^2\,\mathrm {d}\delta _{u^\tau=\xi}\,\mathrm{d}x
\,\mathrm{d}t,
\\
\mathrm{d}n^\tau_2(t,x,\xi)&=& \tau\bigl|\nabla
u^\tau\bigr|^2\,\mathrm {d}\delta_{u^\tau
=\xi}\,\mathrm{d}x\,
\mathrm{d}t.
\end{eqnarray*}
\subsection{Uniform estimates}

Next, we prove a uniform $W^{\lambda,1}(\mathbb{T}^N)$-regularity of the
approximate solutions $u^\tau$. Toward this end, we make use of two
seminorms describing the $W^{\lambda,1}$-regularity of a function
$u\in L^1(\mathbb{T}^N)$ (see \cite{debus}, Section~3.4,  for further
details). Let $\lambda\in(0,1)$ and define
\begin{eqnarray*}
p^{\lambda}(u) &=& \int_{\mathbb{T}^N}\!\int_{\mathbb{T}^N}
\frac
{|u(x)-u(y)|}{|x-y|^{N+\lambda}}\,\mathrm{d}x\,\mathrm{d}y,
\\
p^\lambda_\varrho(u) &=& \sup_{0<\varepsilon<2 D_N}
\frac
{1}{\varepsilon^\lambda}\int_{\mathbb{T}^N}\!\int_{\mathbb{T}
^N}\bigl|u(x)-u(y)\bigr|
\varrho_\varepsilon(x-y)\,\mathrm{d}x\,\mathrm{d}y,
\end{eqnarray*}
where $(\varrho_\varepsilon)$ is the approximation to the identity on
$\mathbb{T}^N$ that is radial, that is, $\varrho_\varepsilon
(x)={1}/{\varepsilon^N}\varrho({|x|}/{\varepsilon})$; and by $D_N$
we denote the diameter of $[0,1]^N$. The fractional Sobolev space
$W^{\lambda,1}(\mathbb{T}^N)$ is defined as a subspace of
$L^1(\mathbb{T}^N)$ with
finite norm
\[
\|u\|_{W^{\lambda,1}(\mathbb{T}^N)}=\|u\|_{L^1(\mathbb
{T}^N)}+p^\lambda(u).
\]
According to \cite{debus}, the following relations holds true between
these seminorms. Let $s\in(0,\lambda)$, there exists a constant
$C=C_{\lambda,\varrho,N}$ such that for all $u\in L^1(\mathbb{T}^N)$
\[
p^\lambda_\varrho(u)\leq C p^\lambda(u),\qquad
p^s(u)\leq\frac
{C}{\lambda-s} p^\lambda_\varrho(u).
\]

\begin{prop}[($W^{\varsigma,1}$-regularity)]\label{propspatial}
Set $\varsigma=\min \{\frac{2\gamma-1}{\gamma+1},\frac
{2\alpha}{\alpha+1} \}$, where $\gamma$ was defined in \eqref
{sigma} and $\alpha$ in \eqref{fceh}. Then for all $s\in(0,\varsigma
)$ there exists a constant $C_{s}>0$ such that for all $t\in[0,T]$ and
all $\tau\in(0,1)$
\[
\mathbb{E} p^{s} \bigl(u^\tau(t) \bigr)\leq
C_{s} \bigl(1+\mathbb{E} p^{\varsigma}(u_0) \bigr).
\]
In particular, there exists a constant $C_{s}>0$ such that for all
$t\in[0,T]$
\[
\mathbb{E}\bigl\|u^\tau(t)\bigr\|_{W^{s,1}(\mathbb{T}^N)}\leq C_{s} \bigl(1+
\mathbb{E}\|u_0\| _{W^{\varsigma,1}(\mathbb{T}^N)} \bigr).
\]
\end{prop}

\begin{pf}
Proof of this statement is based on Proposition~\ref{propdoubling}.
We have
%
\begin{eqnarray*}
&& \mathbb{E}\int_{(\mathbb{T}^N)^2}\!\int_\mathbb{R}
\varrho _\varepsilon(x-y)f^\tau (x,t,\xi)\bar{f}^\tau(y,t,
\xi)\,\mathrm{d}\xi\,\mathrm{d}x\, \mathrm{d}y
\\
&&\qquad \leq\mathbb{E}\int_{(\mathbb{T}^N)^2}\!\int_{\mathbb{R}^2}
\varrho _\varepsilon (x-y)\psi_\delta(\xi-\zeta)f^\tau(x,t,
\xi)\bar{f}^\tau (y,t,\zeta)\,\mathrm{d}\xi\,\mathrm{d}\zeta\,
\mathrm{d}x\,\mathrm {d}y+\delta
\\
&&\qquad\leq\mathbb{E} \int_{(\mathbb{T}^N)^2}\!\int_{\mathbb{R}^2}
\varrho_\varepsilon (x-y)\psi_\delta(\xi-\zeta)f_{0}(x,\xi)
\bar{f}_{0}(y,\zeta )\,\mathrm{d} \xi\,\mathrm{d}\zeta\,\mathrm{d}x
\,\mathrm{d}y\\
&&\qquad\quad{}+\delta+\mathrm {I}+\mathrm{J}+\mathrm {J}^\tau+\mathrm{K}
\\
&&\qquad \leq\mathbb{E}\int_{(\mathbb{T}^N)^2}\!\int_\mathbb{R}
\varrho _\varepsilon (x-y)f_{0}(x,\xi)\bar{f}_{0}(y,
\xi)\,\mathrm{d}\xi\,\mathrm{d}x\, \mathrm{d} y+2\delta+\mathrm{I}+\mathrm{J}+
\mathrm{J}^\tau+\mathrm{K},
\end{eqnarray*}
%
%
where $\mathrm{I}, \mathrm{J}, \mathrm{K}$ are defined similarly
to Proposition~\ref{propdoubling}, $\mathrm{J}^\tau$ corresponds to
the second- order term $\tau\Delta u^\tau$:
%
\begin{eqnarray*}
\mathrm{J}^\tau &=& 2\tau \mathbb{E}\int_0^t
\!\int_{(\mathbb
{T}^N)^2}\!\int_{\mathbb{R}
^2}f^\tau
\bar{f} ^\tau\Delta_x\varrho_\varepsilon(x-y)
\psi_\delta(\xi-\zeta)\, \mathrm{d}\xi\,\mathrm{d}\zeta\,\mathrm{d}x\,
\mathrm{d}y\, \mathrm{d}s
\\
&&{}-\mathbb{E}\int_0^t\!\int
_{(\mathbb{T}^N)^2}\!\int_{\mathbb
{R}^2}\varrho
_\varepsilon(x-y)\psi_\delta(\xi-\zeta)\, \mathrm{d}\nu^ {\tau}
_ { x , s } (\xi)\,\mathrm{d}x\,\mathrm{d}n_{2}^\tau(y,s,
\zeta)
\\
&&{}-\mathbb{E}\int_0^t\!\int
_{(\mathbb{T}^N)^2}\!\int_{\mathbb
{R}^2}\varrho
_\varepsilon(x-y)\psi_\delta(\xi-\zeta)\, \mathrm{d}
\nu^{\tau}_{y,s} (\zeta)\,\mathrm{d}y\,\mathrm{d}n_{2}^\tau(x,s,
\xi)
\\
&=& -\tau\mathbb{E}\int_0^t\!\int
_{(\mathbb{T}^N)^2}\varrho _\varepsilon (x-y)\psi_\delta
\bigl(u^\tau(x)-u^\tau(y) \bigr) \bigl|\nabla_x
u^\tau-\nabla_y u^\tau \bigr|^2\,
\mathrm{d}x\,\mathrm{d}y\,\mathrm {d}s\\
&\leq & 0
\end{eqnarray*}
%
%
and the error term $\delta$ was obtained as follows:
%
\begin{eqnarray}
 &&  \bigg|\mathbb{E}\int_{(\mathbb{T}^N)^2}\!\int
_\mathbb{R}\varrho _\varepsilon(x-y) f^\tau(x,t,
\xi)\bar{f}^\tau(y,t,\xi)\,\mathrm{d}\xi\,\mathrm {d}x\,\mathrm{d}y
\nonumber\\
&&\quad{}-\mathbb{E}\int_{(\mathbb{T}^N)^2}\!\int_{\mathbb
{R}^2}
\varrho_\varepsilon (x-y)\psi_\delta(\xi-\zeta)f^\tau(x,t,
\xi)\bar{f}^\tau (y,t,\zeta)\,\mathrm{d}\xi\,\mathrm{d}\zeta\,
\mathrm{d}x\,\mathrm{d}y \bigg|
\nonumber\\
&&\qquad= \biggl|\mathbb{E}\int_{(\mathbb{T}^N)^2}\varrho _\varepsilon(x-y)
\! \int_\mathbb{R}\mathbf{1}_{u^\tau(x)>\xi}\int
_{\mathbb{R}}\psi _\delta(\xi-\zeta )\nonumber\\
&&\hspace*{148pt}\qquad\quad{}\times [\mathbf{1}_{u^\tau(y)\leq\xi}-
\mathbf{1}_{u^\tau(y)\leq
\zeta} ]\,\mathrm{d}\zeta\,\mathrm{d}\xi\,\mathrm{d}x\,\mathrm{d}y \biggr|
\nonumber
\\
\nonumber
&&\qquad\leq\mathbb{E}\int_{(\mathbb{T}^N)^2}\!\int_\mathbb
{R}\varrho_\varepsilon (x-y) \mathbf{1}_{u^\tau(x)>\xi}\int
_{\xi-\delta}^\xi\psi _\delta (\xi-\zeta)
\mathbf{1}_{\zeta<u^\tau(y)\leq\xi}\,\mathrm {d}\zeta\,\mathrm{d} \xi\,\mathrm{d}x\,
\mathrm{d}y
\\
\label{error}
&&\quad\qquad{}+\mathbb{E}\int_{(\mathbb{T}^N)^2}\!\int_\mathbb{R}
\varrho _\varepsilon(x-y) \mathbf{1}_{u^\tau(x)>\xi}\int_{\xi}^{\xi+\delta}
\psi_\delta (\xi -\zeta) \mathbf{1}_{\xi<u^\tau(y)\leq\zeta}\,\mathrm{d}\zeta\,
\mathrm{d}\xi\, \mathrm{d}x\,\mathrm{d}y
\\
\nonumber
&&\qquad\leq\frac{1}{2} \mathbb{E}\int_{(\mathbb{T}^N)^2}\varrho
_\varepsilon (x-y)\int_{u^\tau(y)}^{\min\{u^\tau(x),u^\tau(y)+\delta\}}\,\mathrm{d} \xi
\,\mathrm{d}x\,\mathrm{d}y
\\
\nonumber
&&\quad\qquad{}+\frac{1}{2} \mathbb{E}\int_{(\mathbb{T}^N)^2}\varrho
_\varepsilon (x-y)\int_{u^\tau(y)-\delta}^{\min\{u^\tau(x),u^\tau(y)\}}\,\mathrm{d} \xi
\,\mathrm{d}x\,\mathrm{d}y\leq\delta.
\end{eqnarray}
%
%
Hence, by the proof of Theorem~\ref{uniqueness}
%
\begin{eqnarray*}
&&\mathbb{E}\int_{(\mathbb{T}^N)^2}\varrho_\varepsilon(x-y)
\bigl|u^\tau (x,t)-u^\tau(y,t) \bigr| \,\mathrm{d}x\,\mathrm{d}y
\\
&&\qquad \leq\mathbb{E} \int_{(\mathbb{T}^N)^2} \varrho_\varepsilon (x-y)
\bigl|u_0(x)-u_0(y) \bigr|\, \mathrm{d}x\,\mathrm{d}y\\
&&\qquad\quad{}+C_T
\bigl(\delta +\delta \varepsilon^{-1}+\delta^2
\varepsilon^{-2}+\delta^{-1}\varepsilon ^2+
\delta^\alpha \bigr).
\end{eqnarray*}
%
%
By optimization in $\delta$, that is, setting $\delta=\varepsilon
^\beta$, we obtain
\[
\sup_{0<\tau<2D_N}\frac{C_T (\delta+\delta\varepsilon
^{-1}+\delta^{2\gamma}\varepsilon^{-2}+\delta^{-1}\varepsilon
^2+\delta^\alpha )}{\varepsilon^\varsigma}\leq C_T,
\]
where the maximal choice of the parameter $\varsigma$ is $\min \{
\frac{2\gamma-1}{\gamma+1},\frac{2\alpha}{\alpha+1} \}$.
As a consequence,
%
\begin{equation}
\label{eqspatial}
\mathbb{E}\int_{(\mathbb{T}^N)^2}\varrho_\varepsilon(x-y)
\bigl|u^\tau (x,t)-u^\tau(y,t) \bigr|\, \mathrm{d}x\,\mathrm{d}y\leq
C_T\varepsilon ^\varsigma \bigl(1+\mathbb{E}
p^\varsigma(u_0) \bigr).\hspace*{-8pt}
\end{equation}
%
%
Finally, multiplying the above by $\varepsilon^{-1-s}$, $s\in
(0,\varsigma)$, and integrating with respect to $\varepsilon\in
(0,2D_N)$ gives the claim.
\end{pf}

\subsection{Strong convergence}
\label{subsecstrong}

According to \eqref{energy}, the set $\{u^\tau; \tau\in(0,1)\}$ is
bounded in $L^p(\Omega;L^p(0,T;L^p(\mathbb{T}^N)))$ and, therefore,
possesses a weakly convergent subsequence. The aim of this subsection
is to show that even strong convergence holds true. Toward this end, we
make use of the ideas developed in Section~\ref{seccomparison}.

\begin{thm}\label{thmconv}
There exists $u\in L^1(\Omega\times[0,T],\mathcal{P},\mathrm
{d}\mathbb{P}
\otimes\mathrm{d}t;L^1(\mathbb{T}^N))$ such that
\[
u^\tau\longrightarrow u\qquad \mbox{in } L^1\bigl(\Omega\times
[0,T],\mathcal{P},\mathrm{d}\mathbb{P}\otimes\mathrm {d}t;L^1\bigl(
\mathbb{T}^N\bigr)\bigr).
\]
\end{thm}

\begin{pf}
By similar techniques as in the proofs of Proposition~\ref
{propdoubling} and Theorem~\ref{uniqueness}, we obtain for any two
approximate solutions $u^\tau, u^\sigma$
%
\begin{eqnarray}
&&\quad \mathbb{E}\int_{\mathbb{T}^N} \bigl(u^\tau(t)-u^\sigma(t)
\bigr)^+\, \mathrm{d} x\nonumber\\
\label{doubling1}
&&\quad\qquad= \mathbb{E}\int_{\mathbb{T}^N}\!\int
_\mathbb{R}f^\tau(x,t,\xi) \bar{f}^\sigma
(x,t,\xi)\,\mathrm{d}\xi\,\mathrm{d}x
\\
\nonumber
&&\quad\qquad = \mathbb{E}\int_{(\mathbb{T}^N)^2}\!\int_{\mathbb{R}^2}\varrho
_\varepsilon (x-y)\psi_\delta(\xi-\zeta)f^{\tau}(x,t,\xi)
\bar{f}^{\sigma
}(y,t,\zeta)\,\mathrm{d}\xi\,\mathrm{d}\zeta\,\mathrm{d}x\,
\mathrm{d}y\\
\nonumber
&&\quad\qquad\quad{}+\eta_t(\tau ,\sigma,\varepsilon,\delta).
\end{eqnarray}
%
%
(Here, $\varepsilon$ and $\delta$ are chosen arbitrarily and their
value will be fixed later.) The idea now is to show that the error term
$\eta_t(\tau,\sigma,\varepsilon,\delta)$ is in fact independent of
$\tau, \sigma$. Indeed, we have
%
\begin{eqnarray*}
&& \eta_t(\tau,\sigma,\varepsilon,\delta)\\
&&\qquad= \mathbb{E}\int
_{\mathbb
{T}^N}\!\int_\mathbb{R}f^\tau(x,t,
\xi)\bar{f}^\sigma(x,t,\xi)\,\mathrm{d}\xi \,\mathrm{d}x
\\
&&\qquad\quad{}-\mathbb{E}\int_{(\mathbb{T}^N)^2}\!\int_{\mathbb
{R}^2}
\varrho_\varepsilon (x-y)\psi_\delta(\xi-\zeta)f^\tau(x,t,
\xi)\bar{f}^\sigma (y,t,\zeta)\,\mathrm{d}\xi\,\mathrm{d}\zeta\,
\mathrm{d}x\,\mathrm {d}y
\\
&&\qquad= \biggl(\mathbb{E}\int_{\mathbb{T}^N}\!\int_\mathbb{R}f^\tau
(x,t,\xi)\bar {f}^\sigma(x,t,\xi)\,\mathrm{d}\xi\,\mathrm{d}x
\\
&&\qquad\hspace*{16pt}{}-\mathbb{E}\int_{(\mathbb{T}^N)^2}\!\int_{\mathbb{R}}
\varrho _\varepsilon (x-y)f^\tau(x,t,\xi)\bar{f}^\sigma(y,t,
\xi)\,\mathrm{d}\xi\, \mathrm{d}x\, \mathrm{d}y \biggr)
\\
&&\qquad\quad{}+ \biggl(\mathbb{E}\int_{(\mathbb{T}^N)^2}\!\int_\mathbb
{R}\varrho_\varepsilon (x-y) f^\tau(x,t,\xi)\bar{f}^\sigma(y,t,
\xi)\,\mathrm{d}\xi\, \mathrm{d}x\, \mathrm{d}y
\\
&&\qquad\quad\hspace*{19pt}{}-\mathbb{E}\int_{(\mathbb{T}^N)^2}\!\int_{\mathbb
{R}^2}
\varrho_\varepsilon (x-y)\psi_\delta(\xi-\zeta)\\
&&\hspace*{54pt}\qquad\qquad\qquad{}\times f^\tau(x,t,
\xi)\bar{f}^\sigma (y,t,\zeta)\,\mathrm{d}\xi\,\mathrm{d}\zeta\,
\mathrm{d}x\,\mathrm {d}y \biggr)
\\
&&\!\!\qquad=\mathrm{H}_1+\mathrm{H}_2,
\end{eqnarray*}
%
%
where
%
\begin{eqnarray*}
|\mathrm{H}_1|&= & \biggl|\mathbb{E}\int_{(\mathbb{T}^N)^2}\varrho
_\varepsilon (x-y)\int_\mathbb{R}\mathbf{1}_{u^\tau(x)>\xi}
[\mathbf {1}_{u^\sigma(x)\leq\xi
}-\mathbf{1}_{u^\sigma(y)\leq\xi} ]\,\mathrm{d}\xi\,\mathrm
{d}x\,\mathrm{d}y \biggr|
\\
&=& \biggl|\mathbb{E}\int_{(\mathbb{T}^N)^2}\varrho_\varepsilon (x-y)
\bigl(u^\sigma(y)-u^\sigma(x) \bigr)\,\mathrm{d}x\,\mathrm{d}y \biggr|
\\
&\leq & C \varepsilon^\varsigma
\end{eqnarray*}
%
%
due to \eqref{eqspatial} and $|H_2|\leq\delta$ due to \eqref{error}.
Therefore, the claim follows, that is, $|\eta_t(\tau,\sigma
,\varepsilon,\delta)|\leq C\varepsilon^\varsigma+\delta$.
Heading back to \eqref{doubling1} and using the same calculations as
in Proposition~\ref{propdoubling}, we deduce
%
\begin{eqnarray*}
&& \mathbb{E}\int_{\mathbb{T}^N} \bigl(u^\tau(t)-u^\sigma(t)
\bigr)^+\, \mathrm{d}x\\
&&\qquad\leq 2C\varepsilon^\varsigma+2\delta+\mathrm{I}+
\mathrm{J}+\mathrm {J}^\#+\mathrm{K}.
\end{eqnarray*}
%
%
The terms $\mathrm{I}, \mathrm{J}, \mathrm{K}$ are defined and can
be dealt with exactly as in Proposition~\ref{propdoubling} and
Theorem~\ref{uniqueness}. The term $\mathrm{J}^\#$ is defined as
%
\begin{eqnarray*}
\mathrm{J}^\#&=& (\tau+\sigma) \mathbb{E}\int_0^t
\!\int_{(\mathbb
{T}^N)^2}\!\int_{\mathbb{R}^2}f^\tau
\bar{f} ^\sigma\Delta_x\varrho_\varepsilon(x-y)
\psi_\delta(\xi-\zeta)\, \mathrm{d}\xi\,\mathrm{d}\zeta\,\mathrm{d}x\,
\mathrm{d}y\, \mathrm{d}s
\\
&&{}-\mathbb{E}\int_0^t\!\int
_{(\mathbb{T}^N)^2}\!\int_{\mathbb
{R}^2}\varrho
_\varepsilon(x-y)\psi_\delta(\xi-\zeta)\, \mathrm{d}\nu^ {\tau}
_ { x , s } (\xi)\,\mathrm{d}x\,\mathrm{d}n_{2}^\sigma(y,s,
\zeta)
\\
&&{}-\mathbb{E}\int_0^t\!\int
_{(\mathbb{T}^N)^2}\!\int_{\mathbb
{R}^2}\varrho
_\varepsilon(x-y)\psi_\delta(\xi-\zeta)\, \mathrm{d}
\nu^{\sigma}_{y,s} (\zeta)\,\mathrm{d}y\,\mathrm{d}n_{2}^\tau(x,s,
\xi)
\end{eqnarray*}
%
%
so
%
\begin{eqnarray*}
\mathrm{J}^\#&=& (\tau+\sigma) \mathbb{E}\int_0^t
\!\int_{(\mathbb{T}
^N)^2}\varrho_\varepsilon(x-y)\psi_\delta
\bigl(u^\tau-u^\sigma\bigr)\nabla _x
u^\tau\cdot\nabla_y u^\sigma\,\mathrm{d}x\,
\mathrm{d}y\, \mathrm{d}s
\\
&&{}-\tau\mathbb{E}\int_0^t\!\int
_{(\mathbb{T}^N)^2}\varrho _\varepsilon (x-y)\psi_\delta
\bigl(u^\tau-u^\sigma\bigr)\bigl|\nabla_x
u^\tau\bigr|^2\,\mathrm {d}x\, \mathrm{d}y\,\mathrm{d}s
\\
&&{}-\sigma\mathbb{E}\int_0^t\!\int
_{(\mathbb{T}^N)^2}\varrho _\varepsilon (x-y)\psi_\delta
\bigl(u^\tau-u^\sigma\bigr)\bigl|\nabla_y
u^\sigma\bigr|^2\,\mathrm {d}x\, \mathrm{d}y\,\mathrm{d}s
\\
&=& - \mathbb{E}\int_0^t\!\int
_{(\mathbb{T}^N)^2}\varrho_\varepsilon (x-y)\psi _\delta
\bigl(u^\tau-u^\sigma\bigr) \bigl|\sqrt{\tau} \nabla_x
u^\tau-\sqrt {\sigma} \nabla_y u^\sigma
\bigr|^2\,\mathrm{d}x\,\mathrm{d}y\, \mathrm{d}s
\\
&&{}+(\sqrt{\tau}-\sqrt{\sigma} )^2 \mathbb{E}\int
_0^t\!\int_{(\mathbb{T}^N)^2}
\varrho_\varepsilon(x-y)\psi_\delta\bigl(u^\tau
-u^\sigma \bigr)\nabla_x u^\tau\cdot
\nabla_y u^\sigma\,\mathrm{d}x\,\mathrm {d}y\,\mathrm{d}s
\\
&=&\mathrm{J}^\#_1+\mathrm{J}^\#_2.
\end{eqnarray*}
%
%
The first term on the right-hand side is nonpositive and can be thus
forgotten; for the second one, we have
%
\begin{eqnarray*}
\bigl|\mathrm{J}^\#_2 \bigr|&\leq & (\sqrt{\tau}-\sqrt{\sigma} )^2
\mathbb{E}\int_0^t\!\int_{(\mathbb{T}^N)^2}
\!\int_{\mathbb
{R}^2}f^\tau\bar{f}^\sigma
\psi_\delta(\xi-\zeta) \bigl|\Delta_x\varrho_\varepsilon(x-y) \bigr|
\,\mathrm{d}\xi\,\mathrm{d}\zeta\,\mathrm{d}x\,\mathrm{d}y\, \mathrm{d}s
\end{eqnarray*}
%
%
and proceeding similarly as in the case of $\mathrm{I}$ we get
%
\begin{eqnarray*}
\bigl|\mathrm{J}^\#_2 \bigr|&\leq & (\sqrt{\tau}-\sqrt{\sigma} )^2
\\
&&{}\times\mathbb{E}\int_0^t\!\int_{(\mathbb{T}^N)^2}\!
\int_{\mathbb{R}^2}|\xi -\zeta+\delta|\, \mathrm{d}
\nu^\tau_{x,s}(\xi)\,\mathrm{d}\nu^\sigma_{y,s}(
\zeta ) \bigl|\Delta_x\varrho_\varepsilon(x-y) \bigr|\,\mathrm{d}x\,
\mathrm{d}y\, \mathrm{d}s
\\
&\leq &  C(\sqrt{\tau}-\sqrt{\sigma} )^2\varepsilon^{-2},
\end{eqnarray*}
%
%
where the last inequality follows from \eqref{energy}.
Consequently, we see that
%
\begin{eqnarray*}
\mathbb{E}\int_0^T\!\!\int_{\mathbb{T}^N}
\bigl(u^\tau(t)-u^\sigma (t) \bigr)^+\, \mathrm{d}x\,
\mathrm{d}t&\leq &  C \bigl(\varepsilon^\varsigma+\delta +\delta
\varepsilon^{-1}+\delta^{2\gamma}\varepsilon^{-2}+\delta
^{-1}\varepsilon^2+\delta^\alpha \bigr)
\\
&&{}+ C(\tau+\sigma) \varepsilon^{-2}
\end{eqnarray*}
%
%
and, therefore, given $\vartheta>0$ one can fix $\varepsilon$ and
$\delta$ small enough so that the first term on the right-hand side is
estimated by $\vartheta/2$ and then find $\iota>0$ such that also the
second term is estimated by $\vartheta/2$ for any $\tau,\sigma<\iota
$. Thus, we have shown that the set of approximate solutions $\{u^\tau
\}$ is Cauchy in $L^1(\Omega\times[0,T],\mathcal{P},\mathrm
{d}\mathbb{P}
\otimes\mathrm{d}t;L^1(\mathbb{T}^N))$, as $\tau\rightarrow0$.
\end{pf}
%

\begin{cor}\label{corconv}
For all $p\in[1,\infty)$,
\[
u^\tau\longrightarrow u\qquad\mbox{in } L^p\bigl(
\Omega\times [0,T],\mathcal{P},\mathrm{d}\mathbb{P}\otimes\mathrm
{d}t;L^p\bigl(\mathbb{T}^N\bigr)\bigr)
\]
and the following estimate holds true:
\[
\mathbb{E}\mathop{\operatorname{ess}\operatorname{sup}}_{0\leq t\leq T}\bigl\| u(t)
\bigr\|_{L^p(\mathbb{T}^N)}^p\leq C \bigl(1+\mathbb{E}\|u_0
\|_{L^p(\mathbb{T}^N)}^p \bigr).
\]
\end{cor}

\begin{pf}
The claim follows directly from Theorem~\ref{thmconv} and the
estimate \eqref{energy}.
\end{pf}
%

\begin{thm}\label{thmfinal}
The process $u$ constructed in Theorem~\ref{thmconv} is the unique
kinetic solution to \eqref{eq} under the additional hypothesis
$\mathrm{(H1)}$.
\end{thm}

\begin{pf}
Let $t\in[0,T]$. According to Corollary~\ref{corconv}, there exists
a set $\Sigma\subset\Omega\times[0,T]\times\mathbb{T}^N$ of full measure
and a subsequence still denoted by $\{u^n; n\in\mathbb{N}\}$ such that
$u^n(\omega,t,x)\rightarrow u(\omega,t,x)$ for all $(\omega,t,x)\in
\Sigma$. We infer that
%
\begin{equation}
\label{c} \mathbf{1}_{u^n(\omega,t,x)>\xi}\longrightarrow\mathbf
{1}_{u(\omega,t,x)>\xi}
\end{equation}
whenever
\[
(\mathbb{P}\otimes\mathcal{L}_{\mathbb{T}^N}\otimes\mathcal {L}_{[0,T]}
) \bigl\{(\omega,x)\in\Sigma; u(\omega,t,x)=\xi \bigr\}=0,
\]
where by $\mathcal{L}_{\mathbb{T}^N}$ and $\mathcal{L}_{[0,T]}$ we denoted
the Lebesque measure on $\mathbb{T}^N$ and $[0,T]$, respectively.
However, the set
\[
D= \bigl\{\xi\in\mathbb{R}; (\mathbb{P}\otimes\mathcal {L}_{\mathbb{T}
^N}
\otimes\mathcal{L}_{[0,T]} ) (u=\xi )>0 \bigr\}
\]
is at most countable since we deal with finite measures. To obtain a
contradiction, suppose that $D$ is uncountable and denote
\[
D_k= \biggl\{\xi\in\mathbb{R}; (\mathbb{P}\otimes\mathcal
{L}_{\mathbb{T}
^N}\otimes\mathcal{L}_{[0,T]} ) (u=\xi )>
\frac
{1}{k} \biggr\},\qquad k\in\mathbb{N}.
\]
Then $D=\bigcup_{k\in\mathbb{N}}D_k$ is a countable union so there exists
$k_0\in\mathbb{N}$ such that $D_{k_0}$ is uncountable. Hence,
%
\begin{eqnarray*}
(\mathbb{P}\otimes\mathcal{L}_{\mathbb{T}^N}\otimes\mathcal {L}_{[0,T]}
) (u\in D ) &\geq &  (\mathbb{P}\otimes \mathcal {L}_{\mathbb{T}^N}\otimes
\mathcal{L}_{[0,T]} ) (u\in D_{k_0} )
\\
&=& \sum_{\xi\in D_{k_0}} (\mathbb{P}\otimes
\mathcal{L}_{\mathbb{T}
^N}\otimes\mathcal{L}_{[0,T]} ) (u=\xi )>\sum
_{\xi\in
D_{k_0}}\frac{1}{k_0}=\infty
\end{eqnarray*}
%
%
and the desired contradiction follows.
We conclude that the convergence in \eqref{c} holds true for a.e.
$(\omega,t,x,\xi)$ and obtain by the dominated convergence theorem
\[
\label{c2} f^n\stackrel{w^*} {\longrightarrow}f\qquad \mbox{in }
L^\infty \bigl(\Omega\times[0,T]\times\mathbb{T}^N\times
\mathbb{R}\bigr).
\]
As a consequence, we can pass to the limit in all the terms on the
left-hand side of the weak form of \eqref{eqkinapprox} and obtain the
left-hand side of \eqref{eqkinformul}. Convergence of the stochastic
integral as well as the last term in the weak form \eqref
{eqkinapprox} to the corresponding terms in \eqref{eqkinformul} can
be verified easily using Corollary~\ref{corconv} and the energy
estimate \eqref{energy}.

In order to obtain the convergence of the remaining term $\partial_\xi
m^\tau=\partial_\xi n^\tau_1+\partial_\xi n^\tau_2$ to a kinetic
measure, we observe that due to the computations used in the proof of~\eqref{energy}, it holds
%
\begin{eqnarray*}
&& \int_0^T\!\!\int_{\mathbb{T}^N}\bigl|
\sigma\bigl(u^\tau\bigr)\nabla u^\tau \bigr|^2
\,\mathrm{d}x\,\mathrm{d} t+\tau\bigl|\nabla u^\tau\bigr|^2\,\mathrm{d}x\,
\mathrm{d}t
\\
&&{}\qquad \leq C\|u_0\| _{L^2(\mathbb{T}^N)}^2+C\sum_{k\geq1}\int_0^T
\!\!\int_{\mathbb{T}^N}u^\tau g_k
\bigl(u^\tau\bigr)\, \mathrm{d}x\, \mathrm{d}\beta_k(t)\\
&&\qquad\quad{}+C
\int_0^T\!\!\int_{\mathbb{T}^N}G^2
\bigl(u^\tau\bigr)\, \mathrm{d}x\,\mathrm{d}s.
\end{eqnarray*}
%
%
Taking square and expectation and finally by the It\^o isometry, we deduce
%
\begin{eqnarray*}
 && \mathbb{E} \bigl|m^\tau\bigl([0,T]\times\mathbb{T}^N\times
\mathbb {R}\bigr) \bigr|^2 \\
&&\qquad= \mathbb{E} \biggl|\int_0^T\!\!
\int_{\mathbb{T}^N}\bigl|\sigma\bigl(u^\tau\bigr)\nabla
u^\tau \bigr|^2\,\mathrm{d}x\, \mathrm{d}t+\tau\bigl|\nabla
u^\tau\bigr|^2\,\mathrm{d}x\,\mathrm{d}t \biggr|^2
\leq   C.
\end{eqnarray*}
%
%
Hence, the set $\{m^\tau; \tau\in(0,1)\}$ is bounded in
$L^2_w(\Omega;\mathcal{M}_b([0,T]\times\mathbb{T}^N\times\mathbb
{R}))$ and,
according to the Banach--Alaoglu theorem, it possesses a weak$^*$
convergent subsequence, denoted by $\{m^n; n\in\mathbb{N}\}$.
Now, it only remains to show that its weak$^*$ limit $m$ is actually a
kinetic measure.
The first point of Definition~\ref{mees} is straightforward as it
corresponds to the weak$^*$-measurability of $m$. The second one giving
the behavior for large $\xi$ is a consequence of the uniform estimate
\eqref{energy}. Indeed, let $(\chi_\delta)$ be a truncation on
$\mathbb{R}$, then it holds, for $p\in[2,\infty)$,
%
\begin{eqnarray*}
&& \mathbb{E}\int_{[0,T]\times\mathbb{T}^N\times\mathbb{R}}|\xi |^{p-2}\,\mathrm{d}
m(t,x,\xi) \\
&&\qquad\leq  \mathop{\lim\inf}_{\delta\rightarrow0}\mathbb{E}\int_{[0,T]\times\mathbb{T}^N\times\mathbb{R}}|
\xi|^{p-2}\chi_\delta (\xi)\,\mathrm{d} m(t,x,\xi)
\\
&&\qquad=\mathop{\lim\inf}_{\delta\rightarrow0}\lim_{n\rightarrow\infty}\mathbb{E} \int
_{[0,T]\times\mathbb{T}^N\times\mathbb{R}}|\xi|^{p-2}\chi _\delta(\xi)\,
\mathrm{d}m^n(t,x,\xi)\leq C,
\end{eqnarray*}
%
%
where the last inequality follows from \eqref{energy}.
Accordingly, $m$ vanishes for large $\xi$.
In order to verify the remaining requirement of Definition~\ref{mees},
let us define
\[
x^n(t)=\int_{[0,t]\times\mathbb{T}^N\times\mathbb{R}}\psi(x,\xi )\,\mathrm{d}
m^n(s,x,\xi)
\]
and take the limit as $n\rightarrow\infty$.
These processes are predictable due to the definition of measures
$m^n$. Let $\alpha\in L^2(\Omega), \gamma\in L^2(0,T)$, then by the
Fubini theorem,
\[
\mathbb{E} \biggl(\alpha\int_0^T\gamma(t)
x^n(t)\,\mathrm{d}t \biggr)=\mathbb{E} \biggl(\alpha\int
_{[0,T]\times\mathbb{T}^N\times\mathbb{R}}\psi (x,\xi)\Gamma (s)\,\mathrm{d}m^n(s,x,
\xi) \biggr),
\]
where $\Gamma(s)=\int_s^T\gamma(t)\,\mathrm{d}t$. Hence, since
$\Gamma$
is continuous, we obtain by the weak convergence of $m^n$ to $m$
\[
\mathbb{E} \biggl(\alpha\int_0^T\gamma(t)
x^n(t)\,\mathrm{d}t \biggr)\longrightarrow\mathbb{E} \biggl(\alpha
\int_0^T\gamma(t) x(t)\, \mathrm{d} t \biggr),
\]
where
\[
x(t)=\int_{[0,t]\times\mathbb{T}^N\times\mathbb{R}}\psi(x,\xi)\, \mathrm{d}m(s,x,\xi).
\]
Consequently, $x^n$ converges to $x$ weakly in $ L^2(\Omega\times
[0,T])$ and, in particular, since the space of predictable
$L^2$-integrable functions is weakly closed, the claim follows.

Finally, by the same approach as above, we deduce that there exist
kinetic measures $o_1, o_2 $ such that
\[
n_1^n\stackrel{w^*} {\longrightarrow}o_1,\qquad
n_2^n\stackrel {w^*} {\longrightarrow} o_2
\qquad\mbox{in } L_w^2\bigl(\Omega ;\mathcal{M}_b
\bigl([0,T]\times\mathbb{T}^N\times\mathbb{R}\bigr)\bigr).
\]
Then from \eqref{energy} we obtain
\[
\mathbb{E}\int_0^T\!\!\int_{\mathbb{T}^N}
\biggl|\operatorname{div}\int_0^{u^n}\sigma(\zeta )\,
\mathrm{d}\zeta \biggr|^2\,\mathrm{d}x\,\mathrm{d}t\leq C
\]
hence application of the Banach--Alaoglu theorem yields that, up to
subsequence, $\operatorname{div}\int_0^{u^n}\sigma(\zeta)\,\mathrm
{d}\zeta$
converges weakly in $L^2(\Omega\times[0,T]\times\mathbb{T}^N)$. On the
other hand, from the strong convergence given by Corollary~\ref
{corconv} and the fact that $\sigma\in C_b(\mathbb{R})$, we conclude using
integration by parts, for all $\psi\in C^1([0,T]\times\mathbb{T}^N)$,
$\mathbb{P}\mbox{-a.s.}$,
\begin{eqnarray*}
&& \int_0^T\!\!\int_{\mathbb{T}^N}
\biggl(\operatorname{div}\int_0^{u^n}\sigma(\zeta)
\, \mathrm{d}\zeta \biggr)\psi(t,x)\,\mathrm{d}x\,\mathrm {d}t\\
&&\qquad\longrightarrow\int
_0^T\!\! \int_{\mathbb{T}^N} \biggl(
\operatorname{div}\int_0^u\sigma(\zeta )\,
\mathrm{d}\zeta \biggr)\psi(t,x)\,\mathrm{d}x\,\mathrm{d}t,
\end{eqnarray*}
and, therefore,
%
\begin{equation}
\label{eqconvergence}
\operatorname{div}\int_0^{u^n}
\sigma(\zeta)\,\mathrm{d}\zeta \stackrel {w} {\longrightarrow}\operatorname{div}
\int_0^u\sigma(\zeta)\, \mathrm{d}\zeta\qquad
\mbox{in } L^2\bigl([0,T]\times\mathbb{T}^N\bigr),
\mathbb{P}\mbox{-a.s.}\hspace*{-6pt}
\end{equation}
Since any norm is weakly sequentially lower semicontinuous, it follows
for all $\varphi\in C_0([0,T]\times\mathbb{T}^N\times\mathbb{R})$
and fixed $\xi
\in\mathbb{R}$, $\mathbb{P}$-a.s.,
%
\begin{eqnarray*}
&& \int_0^T\!\!\int_{\mathbb{T}^N} \biggl|
\operatorname{div}\int_0^{u}\sigma(\zeta)\,
\mathrm{d} \zeta \biggr|^2\varphi^2(t,x,\xi)\,\mathrm{d}x\,
\mathrm{d}t
\\
&&\qquad\leq\mathop{\lim\inf}_{n\rightarrow\infty}\int_0^T
\!\!\int_{\mathbb
{T}^N} \biggl|\operatorname{div}\int_0^{u^n}
\sigma(\zeta)\,\mathrm{d}\zeta \biggr|^2\varphi ^2(t,x,\xi)\,
\mathrm{d}x\,\mathrm{d}t
\end{eqnarray*}
%
%
and by the Fatou lemma
%
\begin{eqnarray*}
&& \int_0^T\!\!\int_{\mathbb{T}^N}\!\int
_{\mathbb{R}} \biggl|\operatorname {div}\int_0^{u}
\sigma (\zeta)\,\mathrm{d}\zeta \biggr|^2\varphi^2(t,x,\xi)\,
\mathrm {d}\delta_{u=\xi
}\,\mathrm{d}x\,\mathrm{d}t
\\
&&\qquad \leq\mathop{\lim\inf}_{n\rightarrow\infty}\int_0^T
\!\!\int_{\mathbb{T}^N}\!\int_{\mathbb{R}
} \biggl|\operatorname{div}
\int_0^{u^n}\sigma(\zeta)\,\mathrm {d}\zeta
\biggr|^2\varphi^2(t,x,\xi)\,\mathrm{d}\delta_{u^n=\xi}\,
\mathrm{d}x\, \mathrm{d}t,\qquad \mathbb{P}\mbox{-a.s.}
\end{eqnarray*}
%
%
In other words, this yields that $n_1\leq o_1$, $\mathbb{P}$-a.s., hence
$n_2=o_2+(o_1-n_1)$ is a.s. a nonnegative measure.

Concerning the chain rule formula \eqref{eqchainrule}, we observe
that it holds true for all $u^n$ due to their regularity, that is, for
any $\phi\in C_b(\mathbb{R})$
%
\begin{equation}
\label{chain}
\quad\operatorname{div}\int_0^{u^n}
\phi(\zeta)\sigma(\zeta)\,\mathrm {d}\zeta=\phi \bigl(u^n\bigr)
\operatorname{div}\int_0^{u^n}\sigma(\zeta)\,
\mathrm{d}\zeta \qquad\mbox{in }\mathcal{D'}\bigl(
\mathbb{T}^N\bigr),\mbox{ a.e. }(\omega,t).\hspace*{-20pt}
\end{equation}
Furthermore, as we can easily obtain \eqref{eqconvergence} with the
integrant $\sigma$ replaced by $\phi\sigma$, we can pass to the
limit on the left-hand side and, making use of the strong-weak
convergence, also on the right-hand side of \eqref{chain}. The proof
is complete.
\end{pf}
%

\section{Existence for degenerate case---general initial data}
\label{secgeneral}

In this final section, we complete the proof of Theorem~\ref
{thmmain}. In particular, we show existence of a kinetic solution to
\eqref{eq} for a general initial data $u_0\in L^p(\Omega;L^p(\mathbb{T}
^N))$, $\forall p\in[1,\infty)$. It is a straightforward consequence
of the previous section.
Indeed, let us approximate the initial condition by a sequence $\{
u^\varepsilon_0\}\subset L^p(\Omega;\break C^\infty(\mathbb{T}^N))$,
$\forall
p\in[1,\infty)$, such that $u_0^\varepsilon\rightarrow u_0$ in
$L^1(\Omega;L^1(\mathbb{T}^N))$ and
%
\begin{equation}
\label{iio}
\bigl\|u^\varepsilon_0\bigr\|_{L^p(\Omega;L^p(\mathbb{T}^N))}\leq
\|u_0\| _{L^p(\Omega
;L^p(\mathbb{T}^N))},\qquad\varepsilon\in(0,1), p\in[1,\infty).
\end{equation}
According to Theorem~\ref{thmfinal}, for each $\varepsilon\in
(0,1)$, there exists a unique kinetic solution $u^\varepsilon$ to
\eqref{eq} with initial condition $u_0^\varepsilon$.
Besides, by the comparison principle~\eqref{comparison},
\[
\mathbb{E}\int_0^T\bigl\|u^{\varepsilon_1}(t)-u^{\varepsilon_2}(t)
\bigl\| _{L^1(\mathbb{T}^N)}\,\mathrm{d}t\leq T \mathbb{E}\bigl\|u^{\varepsilon
_1}_0-u^{\varepsilon_2}_0
\bigr\|_{L^1(\mathbb{T}^N)},\qquad\varepsilon _1,\varepsilon_2
\in(0,1),
\]
hence $\{u^\varepsilon; \varepsilon\in(0,1)\}$ is a Cauchy sequence
in $L^1(\Omega\times[0,T],\mathcal{P},\mathrm{d}\mathbb{P}\otimes
\mathrm{d}
t;  L^1(\mathbb{T}^N))$. Consequently, there exists $u\in L^1(\Omega
\times
[0,T],\mathcal{P},\mathrm{d}\mathbb{P}\otimes\mathrm
{d}t;L^1(\mathbb{T}^N))$ such that
\[
u^\varepsilon\longrightarrow u\qquad\mbox{in } L^1\bigl(
\Omega \times[0,T],\mathcal{P},\mathrm{d}\mathbb{P}\otimes\mathrm
{d}t;L^1\bigl(\mathbb{T}^N\bigr)\bigr).
\]
By \eqref{iio}, we have the uniform energy estimate, $p\in[1,\infty)$,
\[
\mathbb{E}\mathop{\operatorname{ess}\operatorname{sup}}_{0\leq t\leq T}\bigl\|
u^\varepsilon(t)\bigr\|^p_{L^p(\mathbb{T}
^N)}\leq C,
%
\]
as well as
\[
\mathbb{E} \bigl|m^\varepsilon\bigl([0,T]\times\mathbb{T}^N\times
\mathbb {R}\bigr) \bigr|^2\leq C.
\]
Thus, using this observations as in Theorem~\ref{thmfinal}, one finds
that there exists a subsequence $\{u^n; n\in\mathbb{N}\}$ such that:
\begin{longlist}[(ii)]
\item[(i)] $f^n\stackrel{w^*}{\longrightarrow}f  \mbox{ in }
L^\infty(\Omega\times[0,T]\times\mathbb{T}^N\times\mathbb{R})$,
\item[(ii)] there exists a kinetic measure $m$ such that
\[
m^n\stackrel{w^*} {\longrightarrow}m\qquad \mbox{in } L^2_w
\bigl(\Omega;\mathcal{M}_b\bigl([0,T]\times\mathbb{T}^N
\times\mathbb{R}\bigr)\bigr)
\]
and $m=n_1+n_2$, where
\[
\mathrm{d}n_1(t,x,\xi)= \biggl|\operatorname{div}\int_0^u
\sigma (\zeta) \,\mathrm{d}\zeta \biggr|^2\,\mathrm{d}\delta_{u(t,x)}(
\xi)\,\mathrm{d}x\,\mathrm{d}t
\]
and $n_2$ is a.s. a nonnegative measure over $[0,T]\times\mathbb
{T}^N\times
\mathbb{R}$.
\end{longlist}
With these facts in hand, we are ready to pass to the limit in \eqref
{eqkinformul} and conclude that $u$ is the unique kinetic solution to
\eqref{eq}.
The proof of Theorem~\ref{thmmain} is thus complete.

\begin{appendix}
\section*{Appendix:  Generalized It\^o's formula}
\label{secito}
In this section, we establish a generalized It\^o formula for weak
solutions of a very general class of SPDEs of the form
%
\setcounter{equation}{0}
\begin{eqnarray}
\label{eq1}
\mathrm{d}u &=& F(t)\,\mathrm{d}t+\operatorname{div}G(t)\,
\mathrm{d}t+ H(t)\,\mathrm{d}W,
\nonumber
\\[-8pt]
\\[-8pt]
\nonumber
u(0)&=& u_0,
\end{eqnarray}
%
%
where $W$ is the cylindrical Wiener process defined in Section~\ref{secnotation}. Similar ideas were already used in \cite{fellah}. In
the present context, the result is applied in the derivation of the
kinetic formulation in Section~\ref{subsecformulation} as well as
in the proof of a priori $L^p(\mathbb{T}^N)$-estimates in Proposition~\ref{propenergy2}. The result reads as follows.

\begin{prop}\label{propito}
Let $\psi\in C^1(\mathbb{T}^N)$ and $\varphi\in C^2(\mathbb{R})$
with bounded
second-order derivative. Assume that the coefficients $F$, $G_i,
i=1,\dots,N$, belong to $L^2(\Omega;L^2(0,T;L^2(\mathbb{T}^N)))$ and
$H\in
L^2(\Omega;L^2(0,T;L_2(\mathfrak{U};L^2(\mathbb{T}^N))))$, we denote
$H_k=He_k$, $k\in\mathbb{N}$. Let the equation \eqref{eq1} be
satisfied in
$H^{-1}(\mathbb{T}^N)$ for some
\[
u\in L^2\bigl(\Omega;C\bigl([0,T];L^2\bigl(
\mathbb{T}^N\bigr)\bigr)\bigr) \cap L^2\bigl(\Omega
;L^2\bigl(0,T;H^1\bigl(\mathbb{T}^N\bigr)
\bigr)\bigr).
\]
Then almost surely, for all $t\in[0,T]$,
%
\begin{eqnarray}
\nonumber
\bigl\langle\varphi\bigl(u(t)\bigr),\psi \bigr\rangle &=& \bigl
\langle\varphi (u_0),\psi \bigr\rangle+\int_0^t
\bigl\langle\varphi'\bigl(u(s)\bigr)F(s),\psi \bigr\rangle\,
\mathrm{d}s
\\
\nonumber
&&{}-\int_0^t \bigl\langle
\varphi''\bigl(u(s)\bigr)\nabla u\cdot G(s),\psi \bigr
\rangle\,\mathrm{d}s
\\
\label{ito}
&&{}+\int_0^t \bigl\langle
\operatorname{div} \bigl(\varphi'\bigl(u(s)\bigr) G(s) \bigr),\psi
\bigr\rangle\,\mathrm{d}s
\\
\nonumber
&&{}+\int_0^t \bigl\langle
\varphi'\bigl(u(s)\bigr)H(s)\,\mathrm{d}W(s),\psi \bigr\rangle
\\
\nonumber
&&{}+\frac{1}{2}\sum_{k\geq1}\int
_0^t \bigl\langle\varphi ''
\bigl(u(s)\bigr)H_k^2(s),\psi \bigr\rangle\,\mathrm{d}s.
\end{eqnarray}
%
\end{prop}

\begin{pf}
In order to prove the claim, we use regularization by convolutions. Let
$(\varrho_\delta)$ be an approximation to the identity on $\mathbb{T}^N$.
For a function $f$ on $\mathbb{T}^N$, we denote by $f^\delta$ the
convolution $f*\varrho_\delta$. Recall, that if $f\in L^2(\mathbb
{T}^N)$ then
\[
\bigl\|f^\delta\bigr\|_{L^2(\mathbb{T}^N)}\leq\|f\|_{L^2(\mathbb
{T}^N)},\qquad
\bigl\|f^\delta -f\bigr\|_{L^2(\mathbb{T}^N)}\longrightarrow0.
\]
Using $\varrho_\delta(x-\cdot)$ as a test function in \eqref{eq1},
we obtain that
\[
u^\delta(t)=u_0^\delta+\int_0^t
F^\delta(s)\,\mathrm{d}s+ \int_0^t
\operatorname{div}G^\delta(s)\,\mathrm{d}s+\sum
_{k\geq1}\int_0^t
H_k^\delta (s)\,\mathrm{d}\beta_k(s)
\]
holds true for every $x\in\mathbb{T}^N$. Hence, we can apply the classical
1-dimensional It\^o formula to the function $u(x)\mapsto\varphi
(u(x))\psi(x)$ and integrate with respect to $x$
%
%
\begin{eqnarray}
\nonumber
\bigl\langle\varphi\bigl(u^\delta(t)\bigr),\psi \bigr
\rangle &= &\bigl\langle \varphi\bigl(u^\delta_0\bigr),\psi
\bigr\rangle+\int_0^t \bigl\langle\varphi
'\bigl(u^\delta(s)\bigr)F^\delta(s),\psi \bigr
\rangle\,\mathrm{d}s
\\
\nonumber
&&{}-\int_0^t \bigl\langle
\varphi''\bigl(u^\delta(s)\bigr)\nabla
u^\delta (s)\cdot G^\delta(s),\psi \bigr\rangle\,\mathrm{d}s
\\
\nonumber
&&{}+\int_0^t \bigl\langle
\operatorname{div} \bigl(\varphi '\bigl(u^\delta (s)
\bigr)G^\delta(s) \bigr),\psi \bigr\rangle\,\mathrm{d}s
\\[-8pt]
\label{ito1}
\\[-8pt]
\nonumber
&&{}+\sum_{k\geq1}\int_0^t
\bigl\langle\varphi'\bigl(u^\delta (s)\bigr)H_k^\delta(s),
\psi \bigr\rangle\,\mathrm{d}\beta_k(s)
\\
\nonumber
&&{}+\frac{1}{2}\sum_{k\geq1}\int
_0^t \bigl\langle\varphi ''
\bigl(u^\delta(s)\bigr)\bigl[H_k^\delta(s)
\bigr]^2,\psi \bigr\rangle\,\mathrm {d}s\\\
\nonumber
&=& \mathrm {J}_1+
\cdots+\mathrm{J}_6.
\end{eqnarray}
%
%
We will now show that each term in \eqref{ito1} converge a.s. to the
corresponding term in \eqref{ito}.
For the stochastic term, we apply the Burkholder--Davis--Gundy inequality
%
\begin{eqnarray}
&& \mathbb{E}\sup_{0\leq t\leq T} \biggl|\sum
_{k\geq1}\int_0^t \bigl\langle
\varphi'\bigl(u^\delta\bigr)H_k^\delta-
\varphi'(u)H_k,\psi \bigr\rangle\,\mathrm{d}
\beta_k(s) \biggr|
\nonumber
\\
&&\qquad \leq C \mathbb{E} \biggl(\int_0^T\sum
_{k\geq1} \bigl| \bigl\langle \varphi'
\bigl(u^\delta\bigr)H_k^\delta-\varphi'(u)H_k,
\psi \bigr\rangle \bigr|^2\,\mathrm{d}s \biggr)^{{1}/{2}}
\nonumber
\\[-8pt]
\label{stoch}
\\[-8pt]
\nonumber
&&\qquad \leq C \mathbb{E} \biggl(\int_0^T \bigl\|
\varphi'\bigl(u^\delta \bigr)-\varphi '(u)
\bigr\|_{L^2(\mathbb{T}^N)}^2 \bigl\|H^\delta \bigr\| _{L_2(\mathfrak
{U};L^2(\mathbb{T}^N))}^2
\,\mathrm{d}s \biggr)^{{1}/{2}}
\\
\nonumber
&&\qquad\quad{}+ C \mathbb{E} \biggl(\int_0^T \bigl\|
\varphi'(u) \bigr\| _{L^2(\mathbb{T}
^N)}^2 \bigl\|H^\delta-H
\bigr\|_{L_2(\mathfrak{U};L^2(\mathbb
{T}^N))}^2\,\mathrm{d} s \biggr)^{{1}/{2}}.
\end{eqnarray}
%
%
Since $\varphi'$ is Lipschitz, we have $\|\varphi'(u^\delta)-\varphi
'(u)\|_{L^2(\mathbb{T}^N)}\rightarrow0$ a.e. in $\omega,t$ and
%
\begin{eqnarray*}
&& \mathbb{E} \biggl(\int_0^T \bigl\|
\varphi'\bigl(u^\delta\bigr)-\varphi '(u) \bigr\|
_{L^2(\mathbb{T}^N)}^2 \bigl\|H^\delta \bigr\|_{L_2(\mathfrak
{U};L^2(\mathbb{T}
^N))}^2
\,\mathrm{d}s \biggr)^{{1}/{2}}
\\
&&\qquad \leq C \mathbb{E} \biggl(\int_0^T\|u
\|_{L^2(\mathbb{T}^N)}^2\|H\| _{L_2(\mathfrak{U};L^2(\mathbb{T}^N))}^2\,\mathrm{d}s
\biggr)^{{1}/{2}}
\\
&&\qquad\leq C \mathbb{E}\sup_{0\leq t\leq T}\|u\|_{L^2(\mathbb
{T}^N)}^2+C
\mathbb{E} \int_0^T\|H\|_{L_2(\mathfrak{U};L^2(\mathbb{T}^N))}^2
\,\mathrm{d}s
\end{eqnarray*}
%
%
hence the first term on the right-hand side of \eqref{stoch} converges
to zero by dominated convergence theorem. The second one can be dealt
with similarly as $\|H^\delta-H\|_{L_2(\mathfrak{U};L^2(\mathbb{T}
^N))}\rightarrow0$ a.e. in $\omega,t$. As a consequence, we obtain
(up to subsequences) the almost sure convergence of $\mathrm{J}_5$.

All the other terms can be dealt with similarly using the dominated
convergence theorem. Let us now verify the convergence of $\mathrm
{J}_3$. It holds
%
\begin{eqnarray*}
&& \biggl|\int_0^t  \bigl\langle\varphi''
\bigl(u^\delta\bigr)\nabla u^\delta \cdot G^\delta-
\varphi''(u)\nabla u\cdot G,\psi \bigr\rangle\,
\mathrm{d} s \biggr|
\\
&&\qquad \leq\int_0^t \bigl| \bigl\langle
\varphi''\bigl(u^\delta\bigr)\nabla
u^\delta \cdot \bigl(G^\delta-G \bigr),\psi \bigr\rangle \bigr|\,
\mathrm{d}s
\\
&&\qquad\quad{}+\int_0^t \bigl| \bigl\langle
\varphi''\bigl(u^\delta\bigr) \bigl(\nabla
u^\delta-\nabla u \bigr)G,\psi \bigr\rangle \bigr|\,\mathrm{d}s
\\
&&\quad\qquad{}+\int_0^t \bigl| \bigl\langle \bigl(
\varphi''\bigl(u^\delta\bigr)-\varphi
''(u) \bigr)\nabla u\cdot G,\psi \bigr\rangle \bigr|\,
\mathrm{d}s.
\end{eqnarray*}
%
%
Since $\varphi''$ is bounded and $\|G^\delta-G\|_{L^2(\mathbb{T}
^N)}\rightarrow0$, $\|\nabla u^\delta-\nabla u\|_{L^2(\mathbb{T}
^N)}\rightarrow0$ a.e. in $\omega,t$ we deduce by dominated
convergence that the first two terms converge to zero. For the
remaining term, we shall use the fact that $\varphi''(u^\delta
)-\varphi''(u)\rightarrow0$ a.e. in $\omega,t,x$ and dominated
convergence again.

In the case of $\mathrm{J}_4$, we have
%
\begin{eqnarray*}
&& \biggl|\int_0^t  \bigl\langle\varphi'
\bigl(u^\delta\bigr)G^\delta-\varphi '(u)G,\nabla
\psi \bigr\rangle\,\mathrm{d}s \biggr|
\\
&&\qquad \leq\int_0^t \bigl| \bigl\langle
\varphi'\bigl(u^\delta\bigr) \bigl(G^\delta -G
\bigr),\nabla\psi \bigr\rangle \bigr|\,\mathrm{d}s
\\
&&\qquad\quad{}+ \int_0^t \bigl| \bigl\langle \bigl(
\varphi'\bigl(u^\delta\bigr)-\varphi '(u)
\bigr)G,\nabla\psi \bigr\rangle \bigr|\,\mathrm{d}s
\end{eqnarray*}
%
%
hence $\|G^\delta-G\|_{L^2(\mathbb{T}^N)}\rightarrow0$, $\|\varphi
'(u^\delta)-\varphi'(u)\|_{L^2(\mathbb{T}^N)}\rightarrow0$ a.e. in
$\omega
,t$ yield the conclusion. Similarly for $\mathrm{J}_2$.

Concerning $\mathrm{J}_6$, it holds
%
\begin{eqnarray*}
&& \biggl|\sum_{k\geq1}\int_0^t
\bigl\langle\varphi''\bigl(u^\delta \bigr)
\bigl[H_k^\delta\bigr]^2-\varphi''(u)H_k^2,
\psi \bigr\rangle\,\mathrm {d}s \biggr|
\\
&&\qquad\leq\sum_{k\geq1}\int_0^t
\bigl| \bigl\langle\varphi''\bigl(u^\delta \bigr)
\bigl(\bigl[H_k^\delta\bigr]^2-H_k^2
\bigr),\psi \bigr\rangle \bigr|\,\mathrm {d}s
\\
&&\qquad\quad{}+\sum_{k\geq1}\int_0^t
\bigl| \bigl\langle \bigl(\varphi ''\bigl(u^\delta
\bigr)-\varphi''(u) \bigr)H_k^2,
\psi \bigr\rangle \bigr|\, \mathrm{d}s,
\end{eqnarray*}
%
%
where for the first term we make use of boundedness of $\varphi''$,
the fact that
%
\begin{eqnarray*}
&& \bigl\|\bigl[H_k^\delta\bigr]^2 -H_k^2
\bigr\|_{L^1(\mathbb{T}^N)}\leq \bigl\| H_k^\delta-H_k
\bigr\|_{L^2(\mathbb{T}^N)} \bigl\|H_k^\delta+H_k \bigr\|
_{L^2(\mathbb{T}^N)}\longrightarrow0
\end{eqnarray*}
%
%
a.e. in $\omega,t$ and dominated convergence. For the second one, we
employ that $\varphi''(u^\delta)-\varphi(u)\rightarrow0$ a.e. in
$\omega,x,t$ together with boundedness of $\varphi''$.

Since $\varphi'$ has a linear growth, we obtain the convergence of
$\mathrm{J}_1$ as well as the term on the left-hand side of \eqref
{ito1}. Indeed, for all $t\in[0,T]$ we have
%
\begin{eqnarray*}
&& \bigl| \bigl\langle\varphi\bigl(u^\delta(t)\bigr)-\varphi\bigl(u(t)\bigr),
\psi \bigr\rangle \bigr| 
\leq C \bigl(1+\bigl\|u(t)
\bigr\|_{L^2(\mathbb{T}^N)} \bigr)\bigl\|u^\delta(t)-u(t)\bigr\| _{L^2(\mathbb{T}^N)}
\longrightarrow0
\end{eqnarray*}
%
%
and the proof is complete.
\end{pf}
\end{appendix}


\printaddresses
\end{document}